\newtheorem{theorem}{Theorem}
\newtheorem{lemma}[theorem]{Lemma}
\newtheorem{corollary}[theorem]{Corollary}
\newtheorem{definition}[theorem]{Definition}
\theoremstyle{definition}
\newtheorem{experiment}{Experiment} 
\newcommand*{\nolink}[1]{{\protect\NoHyper#1\protect\endNoHyper}}
\newenvironment{customthm}[1]
  {\innercustomthm}
  {\endinnercustomthm}
\newenvironment{customlemma}[1]
  {\innercustomlemma}
  {\endinnercustomlemma}
\newcommand{\inv}{^{-1}}
\newcommand{\E}{\mathbb{E}}    
\newcommand{\I}{\mathcal{I}} 
\renewcommand{\hat}{\widehat}
\renewcommand{\L}{\mathcal{L}} 
\newcommand{\pr}{\mathbb{P}}   
\newcommand{\R}{\mathbb{R}}    
\renewcommand{\S}{\mathcal{S}}   
\newcommand{\argmin}{\mathop{\arg\!\min}}
\renewcommand{\epsilon}{\varepsilon}
\begin{document}

\twocolumn[

\aistatstitle{Minimax Reconstruction Risk of Convolutional Sparse Dictionary Learning}

\aistatsauthor{ Shashank Singh \And Barnab\'as P\'oczos \And Jian Ma}

\aistatsaddress{\texttt{sss1@cs.cmu.edu}\\Machine Learning Department
\And \texttt{bapoczos@cs.cmu.edu}\\Machine Learning Department\\Carnegie Mellon University
\And \texttt{jianma@cs.cmu.edu}\\Computational Biology Department
}
]

\begin{abstract}
Sparse dictionary learning (SDL) has become a popular method for learning parsimonious representations of data, a fundamental problem in machine learning and signal processing. While most work on SDL assumes a training dataset of independent and identically distributed (IID) samples, a variant known as convolutional sparse dictionary learning (CSDL) relaxes this assumption to allow dependent, non-stationary sequential data sources. Recent work has explored statistical properties of IID SDL; however, the statistical properties of CSDL remain largely unstudied. This paper identifies minimax rates of CSDL in terms of reconstruction risk, providing both lower and upper bounds in a variety of settings. Our results make minimal assumptions, allowing arbitrary dictionaries and showing that CSDL is robust to dependent noise. We compare our results to similar results for IID SDL and verify our theory with synthetic experiments.
\end{abstract}

\section{Introduction}
\label{sec:introduction}

Many problems in machine learning and signal processing can be reduced to, or greatly simplified by, finding a parsimonious representation of a dataset. In recent years, partly inspired by models of visual and auditory processing in the brain~\citep{olshausen97sparseCoding,lewicki00overcompleteRepresentations,smith06efficientAuditory}, the method of \emph{sparse dictionary learning} (SDL; also called \emph{sparse coding}) has become a popular way of learning such a representation, encoded as a sparse linear combination of learned \emph{dictionary elements}, or patterns recurring throughout the data.

SDL has been widely applied to image processing problems such as denoising, demosaicing, and inpainting~\citep{kreutz03dictionary,elad06image,aharon06KSVD,mairal08imageRestoration,peyre09sparseTextures}, separation~\citep{peyre07separation}, compression~\citep{bryt08compression}, object recognition~\citep{kavukcuoglu10fast,thiagarajan2014multiple}, trajectory reconstruction~\citep{zhu15trajectoryReconstruction}, and super-resolution reconstruction~\citep{protter09super,gu15superResolution}. In audio processing, it has been used for structured~\citep{schmidt07wind} and unstructured~\citep{jafari11fastSpeech} denoising, compression~\citep{engan1999method}, speech recognition~\citep{smit09continuousSpeech}, speaker separation~\citep{smaragdis07convolutiveSpeechBases}, and music genre classification~\citep{yeh2012supervisedMusic}. SDL has also been used for both supervised~\citep{mehta2013sparsity,maurer2013sparse} and unsupervised~\citep{argyriou07featureLearning} feature learning in general domains.

The vast majority of SDL literature assumes a training dataset consisting of a large number of independent and identically distributed (IID) samples.
Additionally, the dimension of these samples must often be limited due to computational constraints.
On the other hand, many sources of sequential data, such as images or speech, are neither independent nor identically distributed.
Consequently, most of the above SDL applications rely on first segmenting data into small (potentially overlapping) ``patches'', which are then treated as IID. SDL is then applied to learn a ``local'' dictionary for sparsely representing patches. This approach suffers from two major drawbacks. First, the learned dictionary needs to be quite large, because the model lacks translation-invariance within patches. Second, the model fails to capture dependencies across distances larger than patch sizes, resulting in a less sparse representation. These factors in turn limit computational and statistical performance of SDL.

To overcome this, recent work has explored \emph{convolutional sparse dictionary learning} (CSDL; also called \emph{convolutional sparse coding}), in which a global dictionary is learned directly from a sequential data source, such as a large image or a long, potentially non-stationary, time series~\citep{olshausen02sparseCodesTemporal,smaragdis07convolutiveSpeechBases,smit09continuousSpeech}. In the past few years, CSDL algorithms have demonstrated improved performance on several of the above applications, compared to classical (IID) SDL~\citep{bristow13fast,wohlberg14efficientConvolutional,heide15fastConvolutional,huang15convolutionalDictionary,chun17acceleratingCDL}.

The {\bf main goal} of this work is to understand the statistical properties of CSDL, in terms of \emph{reconstruction} (or \emph{denoising}) error (i.e., the error of reconstructing a data sequence from a learned convolutional sparse dictionary decomposition, a process widely used for denoising and compression~\citep{engan1999method,kreutz03dictionary,bryt08compression,zhu15trajectoryReconstruction,gu15superResolution}). We do this by (a) upper bounding the reconstruction risk of an established estimator~\citep{olshausen02sparseCodesTemporal} and (b) lower bounding, in a minimax sense, the risk of reconstructing a data source constructed sparsely from a convolutional dictionary. The emphasis in this paper is on proving results under minimal assumptions on the data, in the spirit of recent results on ``assumptionless'' consistency of the LASSO~\citep{chatterjee13assumptionlessLASSO,rigollet15HiDStatsNotes}. 
As such, we make no assumptions whatsoever on the dictionary or encoding matrices (such as restricted eigenvalue or isometry properties). 

Compared even to these ``assumptionless'' results, we consider dropping yet another assumption, namely that noise is independently distributed.
In many of the above applications (such as image demosaicing or inpainting, or structured denoising), noise is strongly correlated across the data, and yet dictionary learning approaches nevertheless appear to perform consistent denoising.
To the best of our knowledge, this phenomenon has not been explained theoretically.
One likely reason is that this phenomenon does \emph{not} occur in basis pursuit denoising, LASSO, and many related compressed sensing problems, where consistency is usually not possible under arbitrarily dependent noise.
In the context of SDL, tolerating dependent noise is especially important in the convolutional setting, where coordinates of the data are explicitly modeled as being spatially or temporally related.
However, this phenomenon is also not apparent in the recent work of \citet{papyan17CSDLTheory}, which, to the best of our knowledge, is the only work analyzing theoretical guarantees of CSDL, but considers only deterministic bounded noise and studies recovery of the true dictionary, rather than reconstruction error.
Under sufficient sparsity, our results imply consistency (in reconstruction error) of CSDL, even under arbitrary noise dependence.

{\bf Paper Organization:} Section~\ref{sec:notation} defines notation needed to formalize CSDL and our results. Section~\ref{sec:background} provides background on IID and convolutional SDL. Section~\ref{sec:related_work} reviews related theoretical work. Section~\ref{sec:theory} contains statements of our main theoretical results (proven in the Appendix), which are experimentally validated in Section~\ref{sec:experiments} and discussed further in Section~\ref{sec:discussion}, with suggestions for future work.

\section{Notation}
\label{sec:notation}
Here, we define some notation used throughout the paper.

{\bf Multi-convolution:} For two matrices $R \in \R^{(N - n + 1) \times K}$ and $D \in \R^{n \times K}$ with an equal number of columns, we define the \emph{multi-convolution} operator $\otimes$ by
\[R \otimes D
  := \sum_{k = 1}^K R_k * D_k \in \R^N,\]
$R_k$ and $D_k$ denote the $k^{th}$ columns of $R$ and $D$, respectively, and $*$ denotes the standard discrete convolution operator. In the CSDL setting, multi-convolution (rather than standard matrix multiplication, as in IID SDL) is the process by which data is constructed from the encoding matrix $R$ and the dictionary $D$. We note that, like matrix multiplication, multi-convolution is a bilinear operation.

{\bf Matrix norms:}
For any matrix $A \in \R^{n \times m}$ and $p,q \in [0,\infty]$, the $\L_{p,q}$ norm\footnote{When $\min\{p,q\} < 1$, $\|\cdot\|_{p,q}$ is not a norm (it is not sub-additive). Nevertheless, we will say ``norm'' for simplicity.} of $A$ is
\[\|A\|_{p,q}
  := \left( \sum_{j = 1}^m \left( \sum_{i = 1}^n \left| a_{i,j} \right|^p \right)^{q/p} \right)^{1/q}
  = \left( \sum_{j = 1}^m \|A_j\|_p^q \right)^{1/q}\]
(or the corresponding limit if $p$ or $q$ is $0$ or $\infty$) denotes the $q$-norm of the vector whose entries are $p$-norms of columns of $A$. Note that $\|\cdot\|_{2,2}$ is precisely the Frobenius norm.

{\bf Problem Domain:} For positive integers $N$, $n$, and $K$, we use
\[\S := \left\{ (R, D) \in \R^{(N - n + 1) \times K} \times \R^{n \times K} :
              \|D\|_{2,\infty} \leq 1
         \right\}\]
to denote the domain of the dictionary learning problem, (i.e., $(R, D) \in \S$, as described in the next section), and, for any $\lambda \geq 0$, we further use
\[\S_\lambda := \left\{ (R, D) \in \S : \|R\|_{1,1} \leq \lambda \right\}\]
to denote the $\L_{1,1}$-constrained version of this domain. Note that both $\S$ and $\S_\lambda$ are convex sets.

\section{Background: IID and Convolutional Sparse Dictionary Learning}
\label{sec:background}
We now review the standard formulations of IID and convolutional sparse dictionary learning.

\subsection{IID Sparse Dictionary Learning}
\label{subsec:IID_dictionary_learning}

The IID SDL problem considers a dataset $Y \in \R^{N \times d}$ of $N$ IID samples with values in $\R^d$. The goal is to find an approximate decomposition $Y \approx RD$, where $R \in \R^{N \times K}$ is a sparse encoding matrix and $D \in \R^{K \times d}$ is a dictionary of $K$ patterns.
A frequentist starting point for IID SDL is the \emph{linear generative model}~\citep{olshausen97sparseCoding} (LGM), which supposes there exist $R$ and $D$ as above such that
\begin{equation}
Y = X + \epsilon \in \R^{N \times d},
\label{eq:LGM}
\end{equation}
where $X = RD$ and $\epsilon$ is a random noise matrix with independent rows. Under the assumption that $R$ is sparse, a natural approach to estimating the model parameters $R$ and $D$ is to solve the $\L_{1,1}$-constrained optimization problem
\begin{align}
\label{opt:classical_dict_learn}
\left( \hat R_\lambda, \hat D_\lambda \right)
&  = \argmin_{(R,D)} \|Y - RD\|_{2,2}^2 \\
\notag
  \quad \text{ subject to } \quad
& \|R\|_{1,1} \leq \lambda \quad \text{ and } \quad \|D\|_{2, \infty} \leq 1,
\end{align}
where the minimization is over all $R \in [0,\infty)^{N \times K}$ and $D \in \R^{K \times d}$. Here, $\lambda \geq 0$ is a tuning parameter controlling the sparsity of the estimate $\hat R_\lambda$; the $\L_{1,1}$ sparsity constraint can be equivalently expressed as a penalty of $\lambda'\|R\|_{1,1}$ (where $\lambda' \neq \lambda$) on the objective.
Inspired by non-negative matrix factorization~\citep{lee01NMFalgorithms}, $R$ is sometimes constrained to be non-negative to promote interpretability -- it is often more natural to consider a negative multiple of a feature to be a different feature altogether -- but this does not significantly affect theoretical analysis of the problem.
The constraint $\|D\|_{2, \infty} \leq 1$ normalizes the size of the dictionary entries; without this, $\|R\|_{1,1}$ could become arbitrarily small without changing $RD$, by scaling $D$ correspondingly.

Since matrix multiplication is bilinear, the optimization problem~\eqref{opt:classical_dict_learn} is not jointly convex in $R$ and $D$, but it is \emph{biconvex}, i.e., convex in $R$ when $D$ is fixed and convex in $D$ when $R$ is fixed.
This enables, in practice, a number of iterative optimization algorithms, typically based on alternating minimization, i.e., alternating between minimizing~\eqref{opt:classical_dict_learn} in $R$ and in $D$.
Interestingly, recent work \citep{sun15completeRecovery,sun15nonconvex} has shown that, despite being non-convex, the SDL problem is often well-behaved such that standard iterative optimization algorithms can sometimes provably converge to global optima, even without multiple random restarts.

\subsection{Convolutional Sparse Dictionary Learning}
\label{subsec:convolutional_dictionary_learning}

The CSDL problem considers a single data vector $Y \in \R^N$, where $N$ is assumed to be very large.
For example, $Y$ might be a speech~\citep{smaragdis07convolutiveSpeechBases,smit09continuousSpeech} or music~\citep{yeh2012supervisedMusic} signal over time, or functional genomic data~\citep{alipanahi2015predicting} or sequence data~\citep{zhou15deepsea,singh16SPEID} over the length of the genome. Simple extensions can consider, for example, large, high-resolution images by letting $Y \in \R^{N_1 \times N_2}$ be $2$-dimensional~\citep{mairal09online}.
As discussed in Section~\ref{sec:discussion}, CSDL can also generalize in multiple ways to multichannel data $Y \in \R^{N \times d}$ (e.g., to handle multiple parallel audio streams or functional genomic signals, or color images). To keep our main results simple, this paper only considers a single-dimensional ($Y \in \R^N$), single-channel ($d = 1$) signal, which already presents interesting questions, (whereas IID SDL with $d = 1$ degenerates to estimating a sparse sequence).

The goal here is to find an approximate decomposition $Y \approx R \otimes D$, where $R \in \R^{(N - n + 1) \times K}$ is a sparse encoding matrix and $D \in \R^{n \times K}$ is a dictionary of $K$ patterns.\footnote{This choice of notation implies some coupling between parameters $n$ and $N$ (namely $n \leq N$), but we usually have $n \ll N$, and our discussion involves the length of $R \otimes D$ (the sample size) more than the length of $R$, so it is convenient to use $N$ for the former.} CSDL can also be studied in a frequentist model, the \emph{temporal linear generative model} (TLGM)~\citep{olshausen02sparseCodesTemporal}, which supposes that there exist $R$ and $D$ as above such that
\begin{equation}
Y = X + \epsilon \in \R^N,
\label{eq:TLGM_model}
\end{equation}
where $X = R \otimes D$, and, again, $\epsilon$ is random noise, though, in this setting, it may make less sense to assume that the rows of $\epsilon$ are independent.
Under the assumption that $R$ is sparse, a natural approach to estimating the model parameters $R$ and $D$ is again to solve an $\L_{1,1}$-constrained optimization problem, this time
\begin{equation}
(\hat R_\lambda, \hat D_\lambda)
  := \argmin_{(R, D) \in \S_\lambda} \left\| Y - R \otimes D \right\|_2^2
\label{opt:constrained_CSDL}
\end{equation}
where the minimization is over all $R \in [0, \infty)^{(N - n + 1) \times K}$ and $D \in \R^{n \times K}$. Since multi-convolution is bilinear, the optimization problem~\eqref{opt:constrained_CSDL} is again biconvex, and can be approached by alternating minimization. As with the IID case, the constrained optimization problem~\eqref{opt:constrained_CSDL} can equivalently be expressed as a penalized problem, specifically,
\begin{equation}
\left( \hat R_{\lambda'}, \hat D_{\lambda'} \right)
  = \argmin_{(R,D) \in \S} \left\| Y - R \otimes D \right\|_2^2 + \lambda'\|R\|_{1,1}
\label{opt:penalized_CSDL}
\end{equation}
(where, again, $\lambda \neq \lambda'$). For the remainder of this paper, we will discuss the constrained problem~\eqref{opt:constrained_CSDL}, but we show in the Appendix that equivalent results hold for the penalized problem~\eqref{opt:penalized_CSDL}.

To summarize, the key differences between the IID and convolutional SDL problems setups are:

\begin{enumerate}[wide,topsep=0pt,partopsep=1pt,parsep=1pt]
\setlength{\itemsep}{0em}
\item
In CSDL, we seek a decomposition $Y \approx R \otimes D$, whereas, in IID SDL, we seek a decomposition $Y \approx RD$. Unlike matrix multiplication, by which each row of $R$ corresponds to a single row of $X$, multi-convolution allows each row of $R$ to contribute to up to $n$ consecutive rows of $X$, modeling, for example, temporally or spatially related features.
\item
In CSDL, the noise $\epsilon$ may have arbitrary dependencies, whereas, in IID SDL, it typically has independent rows.
\item
CSDL involves an additional (potentially unknown) parameter $n$ controlling the length of the dictionary entries,\footnote{In fact, $n$ can be distinct for each of the $K$ features, suggesting a natural approach to learning \emph{multi-scale} convolutional dictionaries, which are useful in many contexts. We leave this avenue for future work.} whereas, in IID SDL, $n = d$ is known.
\end{enumerate}

\section{Related Work}
\label{sec:related_work}
There has been some work theoretically analyzing the non-convex optimization problem~\eqref{opt:classical_dict_learn} in terms of which IID SDL is typically cast~\citep{mairal09online,sun15nonconvex,sun15completeRecovery}, with a consensus that despite being non-convex, this problem is often efficiently solvable in practice. Our work focuses on the statistical aspects of CSDL, and we assume, for simplicity, that the global optimum of the CSDL optimization problem~\eqref{opt:constrained_CSDL} can be computed to high accuracy; more work is needed to link the parameters of the CSDL problem to efficient and accurate computability of this optimum.

There has been some work analyzing statistical properties of IID SDL. For some algorithms, upper bounds have been shown on the risk (in Frobenius norm, up to permutation of the dictionary elements) of estimating the true dictionary $D$~\citep{agarwal14learning,arora14newAlgorithms}.
\citet{vainsencher11sampleComplexity} studied generalizability of dictionary learning in terms of representation error of a learned dictionary on an independent test set from the same distribution as the training set.
Recently, \citet{jung14performanceLimits,jung16minimax} proved the first minimax lower bounds for IID SDL, in several settings, including a general dense model, a sparse model, and a sparse Gaussian model.

The work most closely related to ours is that of \citet{papyan17CSDLTheory}, who recently began studying the numerical properties of the CSDL. Importantly, they show that, although CSDL~\eqref{opt:constrained_CSDL} can be expressed as a constrained version of IID SDL~\eqref{opt:classical_dict_learn}, a novel, direct analysis of CSDL, in terms of more refined problem parameters leads to stronger guarantees than does applying analysis from IID SDL. Their results are complementary to ours, for several reasons:
\begin{enumerate}[wide,topsep=0pt,partopsep=1pt,parsep=1pt]
\setlength{\itemsep}{0pt}
\item
We study error of reconstructing $X = R \otimes D$, whereas they studied recovery of the dictionary $D$, which requires strong assumptions on $D$ (see below).
\item
They consider worst-case deterministic noise of bounded $\L_2$ norm, whereas we consider random noise under several different statistical assumptions.
Correspondingly, we give (tighter) bounds on expected, rather than worst-case, error.
\item
They study the $\L_{0,0}$-``norm'' version of the problem, while we study the relaxed $\L_{1,1}$-norm version. By comparison to analysis for best subset selection and the LASSO in linear regression, we might expect solutions to the $\L_{0,0}$ problem to have superior statistical performance in terms of reconstruction error, and that stronger assumptions on the dictionary $D$ may be necessary to recover $D$ via the $\L_{1,1}$ approach than via the $\L_{0,0}$ approach. Conversely, the $\L_{0,0}$ problem is NP-hard, whereas the $\L_{1,1}$ problem can be solved via standard optimization approaches, and is hence used in practice.
\footnote{Recent algorithmic advances in mixed integer optimization have rendered best subset selection computable for moderately large problem sizes~\citep{bertsimas16best}. It remains to be seen how effectively these methods can be leveraged for SDL.}
\end{enumerate}

Notably, these previous results all require strong restrictions on the structure of the dictionary. These restrictions have been stated in several forms, from incoherence assumptions~\citep{jenatton12local,agarwal14learning} and restricted isometry conditions~\citep{jung14performanceLimits,jung16minimax,papyan17CSDLTheory} to bounds on the Babel function~\citep{tropp04greed,vainsencher11sampleComplexity} of the dictionary, but all essentially boil down to requiring that the dictionary elements are not too correlated. As discussed in \citet{papyan17CSDLTheory}, the assumptions needed in the convolutional case are even stronger than in the IID case: no \emph{translations} of the dictionary elements can be too correlated. Furthermore, these conditions are not verifiable in practice. A notable feature of our upper bounds is that they make no assumptions whatsoever on the dictionary $D$; this is possible because our bounds apply to reconstruction error, rather than to the error of learning the dictionary itself. As noted above, this can be compared to the fact that, in sparse linear regression, bounds on prediction error can be derived with essentially no assumptions on the covariates~\citep{chatterjee13assumptionlessLASSO}, whereas much stronger assumptions, to the effect that the covariates are not too strongly correlated, are needed to derive bounds for estimating the linear regression coefficients.

Finally, as noted earlier, we make minimal assumptions on the structure of the noise; in particular, though we require the noise to have light tails (in either a sub-Gaussian or finite-moment sense), we allow arbitrary dependence across the data sequence. This is important because, in many applications of CSDL, errors are likely to be correlated with those in nearby portions of the sequence. In the vast compressed sensing literature, there is relatively little work under these very general conditions, likely because most problems in this area, such as basis pursuit denoising or the LASSO, are clearly not consistently solvable under such general conditions. All the previously mentioned works on guarantees for dictionary learning \citep{vainsencher11sampleComplexity,agarwal14learning,arora14newAlgorithms,jung14performanceLimits,jung16minimax} also assume no or independent noise.

\section{Theoretical Results}
\label{sec:theory}

We now present our theoretical results on the minimax average $\L_2$-risk of reconstructing $X = R \otimes D$ from $Y$, i.e.,
\begin{equation}
M(N, n, \lambda, \sigma)
  := \inf_{\hat X} \sup_{(R, D) \in \S_\lambda} \frac{1}{N} \E \left[ \left\| \hat X - X \right\|_2^2 \right],
\label{exp:minimax_risk}
\end{equation}
where the infimum is taken over all estimators $\hat X$ of $X$ (i.e., all functions of the observation $Y$). The quantity~\eqref{exp:minimax_risk} characterizes the worst-case mean squared error of the average coordinate of $\hat X$, for the best possible estimator $\hat X$. Since it bounds within-sample reconstruction error, these results are primarily relevant for compression and denoising applications, rather than for learning an interpretable dictionary.

\subsection{Upper Bounds for Constrained CSDL}
\label{subsec:upper_bounds}

In this section, we present our upper bounds on the reconstruction risk of the $\L_{1,1}$-constrained CSDL estimator \eqref{opt:constrained_CSDL}, thereby upper bounding the minimax risk~\eqref{exp:minimax_risk}. We begin by noting a simple oracle bound, which serves as the starting point for our remaining upper bound results.

\begin{lemma}
Let $Y = X + \epsilon \in \R^N$.
Then, for any $\left( R, D \right) \in \S_\lambda$,
\begin{align}
\notag
& \| X - \hat R_\lambda \otimes \hat D_\lambda \|_2^2 \\
& \leq \| X -  R \otimes D \|_2^2 + 2\langle \epsilon, \hat R_\lambda \otimes \hat D_\lambda - R \otimes D \rangle.
\label{ineq:oracle_inequality}
\end{align}
\label{lemma:oracle_inequality}
\end{lemma}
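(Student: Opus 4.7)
The plan is to start from the definition of $(\hat R_\lambda, \hat D_\lambda)$ as a minimizer of the CSDL objective over $\S_\lambda$, substitute the model $Y = X + \epsilon$, and let the squared-norm expansions produce the inner-product term on the right-hand side. This is the standard ``basic inequality'' template used for LASSO-type oracle bounds.

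Concretely, I would first observe that since $(R,D) \in \S_\lambda$ is feasible for the optimization problem~\eqref{opt:constrained_CSDL}, the optimality of $(\hat R_\lambda, \hat D_\lambda)$ yields
\[\left\| Y - \hat R_\lambda \otimes \hat D_\lambda \right\|_2^2 \leq \left\| Y - R \otimes D \right\|_2^2.\]
Writing $\hat X := \hat R_\lambda \otimes \hat D_\lambda$ and $X' := R \otimes D$, I would then substitute $Y = X + \epsilon$ and expand both squared norms as
\[\|X - \hat X\|_2^2 + 2\langle \epsilon, X - \hat X \rangle + \|\epsilon\|_2^2 \;\leq\; \|X - X'\|_2^2 + 2\langle \epsilon, X - X' \rangle + \|\epsilon\|_2^2.\]

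Cancelling the common $\|\epsilon\|_2^2$ term and moving the inner products around gives
\[\|X - \hat X\|_2^2 \;\leq\; \|X - X'\|_2^2 + 2\langle \epsilon, \hat X - X' \rangle,\]
which is precisely inequality~\eqref{ineq:oracle_inequality} after substituting back $\hat X = \hat R_\lambda \otimes \hat D_\lambda$ and $X' = R \otimes D$.

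There is no real obstacle here: bilinearity of $\otimes$ is not even needed, only that $\hat R_\lambda \otimes \hat D_\lambda$ and $R \otimes D$ are elements of $\R^N$ so that the standard Euclidean identity $\|a-b\|_2^2 = \|a\|_2^2 - 2\langle a, b\rangle + \|b\|_2^2$ applies. The only subtlety worth flagging is that the inequality holds deterministically (pointwise in $\epsilon$), not just in expectation, which is what makes this lemma useful as a starting point for controlling $\E \| X - \hat R_\lambda \otimes \hat D_\lambda\|_2^2$ by bounding the stochastic term $\E \langle \epsilon, \hat R_\lambda \otimes \hat D_\lambda - R \otimes D\rangle$ under whichever noise assumption is in force.
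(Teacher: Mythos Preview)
Your argument is correct and proceeds by a slightly different (and in fact more direct) route than the paper's. The paper does not expand the two squared norms: instead it invokes the first-order variational inequality
\[
\langle Y - \hat R_\lambda \otimes \hat D_\lambda,\; R \otimes D - \hat R_\lambda \otimes \hat D_\lambda \rangle \leq 0,
\]
substitutes $Y = X + \epsilon$, rewrites the resulting inner product via the polarization identity, and finally discards the nonnegative term $\|R \otimes D - \hat R_\lambda \otimes \hat D_\lambda\|_2^2$. Your approach uses only the value inequality $\|Y - \hat R_\lambda \otimes \hat D_\lambda\|_2^2 \leq \|Y - R \otimes D\|_2^2$, which follows immediately from the definition of the minimizer with no appeal to first-order conditions, and then cancels $\|\epsilon\|_2^2$. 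This is a bit more elementary: it sidesteps the polarization step entirely and does not need the image $\{R \otimes D : (R,D) \in \S_\lambda\} \subseteq \R^N$ to be convex (which is what the variational inequality implicitly uses, and which is not obvious here since $(R,D) \mapsto R \otimes D$ is bilinear rather than linear). Both routes land at the same bound; yours just takes the shorter path.
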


This result decomposes the error of constrained CSDL into error due to model misspecification:
\begin{equation}
\| X -  R \otimes D \|_2^2
\label{exp:model_misspec_error}
\end{equation}
and statistical error:
\begin{equation}
2\langle \epsilon, \hat R_\lambda \otimes \hat D_\lambda - R \otimes D \rangle.
\label{exp:stochastic_error}
\end{equation}

For simplicity, our remaining results assume the TLGM~\eqref{eq:TLGM_model} holds, so that~\eqref{exp:model_misspec_error} is $0$. We therefore focus on bounding the statistical error~\eqref{exp:stochastic_error}, uniformly over $(R,D) \in \S_\lambda$. However, the reader should keep in mind that our upper bounds hold, with an additional $\inf_{(R,D) \in \S_\lambda} \| X -  R \otimes D \|_2^2$ term, without the TLGM. Equivalently, our results can be considered bounds on excess risk relative to the optimal sparse convolutional approximation of $X$. In particular, this suggests robustness of constrained CSDL to model misspecification.

Our main upper bounds apply under sub-Gaussian noise assumptions. We distinguish two notions of multivariate sub-Gaussianity, defined as follows:

\begin{definition}[Componentwise Sub-Gaussianity]
An $\R^N$-valued random variable $\epsilon$ is said to be componentwise sub-Gaussian with constant $\sigma > 0$ if
\[\sup_{i \in \{1,...,N\}} \E \left[ e^{t \epsilon_i} \right]
  \leq e^{t^2 \sigma^2/2},
  \quad \text{ for all } t \in \R.\]
\end{definition}

\begin{definition}[Joint Sub-Gaussianity]
An $\R^N$-valued random variable $\epsilon$ is said to be jointly sub-Gaussian with constant $\sigma > 0$ if
\[\E \left[ e^{\langle t, \epsilon \rangle} \right]
  \leq e^{\|t\|_2^2 \sigma^2/2},
  \quad \text{ for all } t \in \R^N.\]
\end{definition}

Componentwise sub-Gaussianity is a much weaker condition than joint sub-Gaussianity, and, in real data, it is also often more intuitive or measurable.
However, the two definitions coincide when the components of $\epsilon$ are independent, since the moment generating function $\E \left[ e^{\langle t, \epsilon \rangle} \right]$ factors, and, for this reason, joint sub-Gaussianity is commonly assumed in high-dimensional statistical problems~\citep{ledoux13BanachSpaces,boucheron13concentration,rigollet15HiDStatsNotes}.
As we will show, these two conditions lead to different minimax error rates.
For sake of generality, we avoid making any independence assumptions, but our results for jointly sub-Gaussian noise can be thought of as equivalent to results for componentwise sub-Gaussian noise, under the additional assumption that the noise has independent components.

Consider first the case of componentwise sub-Gaussian noise. Then, constrained CSDL satisfies the following:
\begin{theorem}[Upper Bound for Componentwise Sub-Gaussian Noise]
Assume the TLGM holds, suppose the noise $\epsilon$ is componentwise sub-Gaussian with constant $\sigma$, and let the constrained CSDL tuning parameter $\lambda$ satisfy $\lambda \geq \|R\|_{1,1}$. Then, the reconstruction estimate $\hat X_\lambda = \hat R_\lambda \otimes \hat D_\lambda$ satisfies
\begin{equation}
\frac{1}{N} \E \left[ \|\hat X_\lambda - X\|_2^2 \right]
  \leq \frac{4 \lambda \sigma \sqrt{2 n \log(2N)}}{N}.
\label{ineq:dependent_bound}
\end{equation}
\label{thm:dependent_bound}
\end{theorem}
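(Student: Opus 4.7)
The plan is to apply the oracle inequality (Lemma~\ref{lemma:oracle_inequality}) with $(R, D)$ equal to the true parameters from the TLGM, noting that the hypothesis $\lambda \geq \|R\|_{1,1}$ (together with $\|D\|_{2,\infty} \leq 1$) ensures $(R, D) \in \S_\lambda$. Under the TLGM the misspecification term $\|X - R \otimes D\|_2^2$ vanishes, leaving
\[\|\hat X_\lambda - X\|_2^2 \leq 2 \langle \epsilon, \hat R_\lambda \otimes \hat D_\lambda - R \otimes D \rangle.\]
Since both $(R, D)$ and $(\hat R_\lambda, \hat D_\lambda)$ lie in $\S_\lambda$, the triangle inequality bounds the right-hand side by $4 \sup_{(R', D') \in \S_\lambda} |\langle \epsilon, R' \otimes D' \rangle|$, reducing the problem to controlling the expectation of this supremum.

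The key step is a deterministic reduction of the bilinear supremum to a scalar maximum over sliding windows of $\epsilon$. Writing $\epsilon^{(i)} := (\epsilon_i, \epsilon_{i+1}, \ldots, \epsilon_{i+n-1}) \in \R^n$ and unrolling the multi-convolution, one has
\[\langle \epsilon, R' \otimes D' \rangle = \sum_{i=1}^{N-n+1} \sum_{k=1}^K R'_{i,k} \langle D'_k, \epsilon^{(i)} \rangle.\]
Then I would apply Hölder's inequality (duality of $\|\cdot\|_{1,1}$ and $\|\cdot\|_{\infty,\infty}$) with $\|R'\|_{1,1} \leq \lambda$, followed by Cauchy--Schwarz with $\|D'_k\|_2 \leq 1$, to obtain
\[|\langle \epsilon, R' \otimes D' \rangle| \leq \lambda \max_{i, k} |\langle D'_k, \epsilon^{(i)} \rangle| \leq \lambda \max_{i} \|\epsilon^{(i)}\|_2,\]
a bound that no longer depends on $(R', D')$.

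It remains to bound $\E\!\left[\max_i \|\epsilon^{(i)}\|_2\right]$ under componentwise sub-Gaussianity. The coarse estimate $\|\epsilon^{(i)}\|_2 \leq \sqrt{n} \max_{1 \leq j \leq n} |\epsilon_{i+j-1}|$ gives $\max_i \|\epsilon^{(i)}\|_2 \leq \sqrt{n} \max_{1 \leq \ell \leq N} |\epsilon_\ell|$, and the standard maximal inequality applied to the $2N$ sub-Gaussian variables $\pm\epsilon_1, \ldots, \pm\epsilon_N$ (each with constant $\sigma$) yields $\E[\max_\ell |\epsilon_\ell|] \leq \sigma \sqrt{2\log(2N)}$. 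Chaining the estimates produces
\[\E \|\hat X_\lambda - X\|_2^2 \leq 4 \lambda \sqrt{n}\, \sigma \sqrt{2 \log(2N)},\]
and dividing by $N$ recovers the claim.

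The main obstacle is that $(R, D) \mapsto R \otimes D$ is bilinear rather than linear, so one cannot directly apply a sub-Gaussian concentration inequality to the noise functional over the constraint set. The Hölder--Cauchy--Schwarz pair resolves this by cleanly decoupling the roles of the $\|R\|_{1,1}$ and $\|D\|_{2,\infty}$ constraints and pushing all stochasticity into the sliding-window norms $\|\epsilon^{(i)}\|_2$. Under only componentwise sub-Gaussianity we cannot exploit any joint structure of the linear combinations $\langle D'_k, \epsilon^{(i)}\rangle$, which is why the reduction to coordinate-wise maxima incurs the $\sqrt{n}$ factor; this factor is presumably what a covering or chaining argument over the column ball $\{D'_k : \|D'_k\|_2 \leq 1\}$ would tighten in the joint sub-Gaussian version.
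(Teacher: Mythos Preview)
Your proof is correct and follows essentially the same route as the paper: apply the oracle inequality (Lemma~\ref{lemma:oracle_inequality}) under the TLGM, bound the noise inner product deterministically by $4\lambda\sqrt{n}\,\|\epsilon\|_\infty$, and finish with the sub-Gaussian maximal inequality $\E\|\epsilon\|_\infty \le \sigma\sqrt{2\log(2N)}$. The only cosmetic difference is in how the factor $\lambda\sqrt{n}$ is extracted: the paper uses H\"older in $\R^N$ as $\langle \epsilon, v\rangle \le \|\epsilon\|_\infty\|v\|_1$ and then Young's inequality (Corollary~\ref{corr:trivial_bound}) to get $\|R'\otimes D'\|_1 \le \|R'\|_{1,1}\|D'\|_{1,\infty} \le \lambda\sqrt{n}$, while you unroll the convolution, apply H\"older in the $R'$ variables, and then Cauchy--Schwarz on each window $\epsilon^{(i)}$---both computations arrive at the same bound.
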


Consider now the case of jointly sub-Gaussian noise. Then, an appropriately tuned constrained CSDL estimate satisfies the following tighter bound:

\begin{theorem}[Upper Bound for Jointly Sub-Gaussian Noise]
Assume the TLGM holds, suppose the noise $\epsilon$ is jointly sub-Gaussian with constant $\sigma$, and let the constrained CSDL tuning parameter $\lambda$ satisfy $\lambda \geq \|R\|_{1,1}$. Then, the reconstruction estimate $\hat X_\lambda = \hat R_\lambda \otimes \hat D_\lambda$ satisfies
\begin{equation}
\frac{1}{N} \E \left[ \|\hat X_\lambda - X\|_2^2 \right]
  \leq \frac{4 \lambda \sigma \sqrt{2 \log(2(N - n + 1))}}{N}.
\label{ineq:independent_bound}
\end{equation}
\label{thm:independent_bound}
\end{theorem}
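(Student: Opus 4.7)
The plan is to start from the oracle inequality (the preceding Lemma). Under the TLGM, taking the true parameters $(R,D)$ in the bound makes the model-misspecification term $\|X - R \otimes D\|_2^2$ vanish, reducing the oracle inequality to
\[
\|\hat X_\lambda - X\|_2^2 \leq 2\langle \epsilon, \hat X_\lambda - X\rangle.
\]
Taking expectations and using that $\epsilon$ is centered (which follows from sub-Gaussianity) together with the fact that $X$ is deterministic, we have $\E\langle \epsilon, X\rangle = 0$, so
\[
\E\|\hat X_\lambda - X\|_2^2 \leq 2\E\langle \epsilon, \hat X_\lambda\rangle \leq 2\E\sup_{(R',D') \in \S_\lambda}\langle \epsilon, R' \otimes D'\rangle.
\]

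The central step is to control this Gaussian-width-like quantity. Using the bilinearity of multi-convolution, I expand $R' \otimes D' = \sum_{k,i} R'_{k,i}(e_i * D'_k)$, so that
\[
\langle \epsilon, R' \otimes D'\rangle = \sum_{k,i} R'_{k,i}\,\langle D'_k, \epsilon_{i:i+n-1}\rangle,
\]
where $\epsilon_{i:i+n-1}$ denotes the length-$n$ window of $\epsilon$ starting at coordinate $i$. Applying the $\L_{1,1}/\L_{\infty,\infty}$ Hölder duality on the $(k,i)$ index, together with $\|R'\|_{1,1} \leq \lambda$, yields
\[
\sup_{(R',D') \in \S_\lambda}\langle \epsilon, R' \otimes D'\rangle \leq \lambda \sup_{D' : \|D'\|_{2,\infty} \leq 1} \max_{k,i} |\langle D'_k, \epsilon_{i:i+n-1}\rangle|.
\]

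The main obstacle is then to bound the expected value of this last supremum using joint sub-Gaussianity. The key distinction from the componentwise case is that, under the joint assumption, any fixed linear functional $\langle a, \epsilon\rangle$ with $\|a\|_2 \leq 1$ is a scalar $\sigma$-sub-Gaussian random variable, whereas under the componentwise assumption one can only guarantee sub-Gaussianity with parameter scaling like $\sigma\|a\|_1$. I would therefore structure the maximal inequality so that the supremum over the unit-norm direction $D'_k \in \R^n$ is absorbed into a controlled sub-Gaussian bound at each position $i$, reducing the outer maximum to $N-n+1$ scalar $\sigma$-sub-Gaussians indexed by the position $i$. Applying the standard maximal inequality $\E\max_{j \in [m]}|Z_j| \leq \sigma\sqrt{2\log(2m)}$ with $m = N-n+1$ then gives $\lambda \sigma \sqrt{2\log(2(N-n+1))}$ for the supremum. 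Plugging back into the oracle inequality produces $\E\|\hat X_\lambda - X\|_2^2 \leq 4\lambda\sigma\sqrt{2\log(2(N-n+1))}$, and dividing by $N$ yields the stated bound. The hardest and most delicate step is the passage from the window-dependent supremum to the scalar maximal inequality without paying a $\sqrt{n}$ factor, and this is precisely where the joint sub-Gaussian hypothesis is essential.
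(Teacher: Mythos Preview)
Your proposal has a genuine gap at precisely the step you flag as hardest. Once you take the supremum over unit-norm directions, you obtain
\[
\sup_{\|d\|_2 \le 1}\,|\langle d,\epsilon_{i:i+n-1}\rangle| \;=\; \|\epsilon_{i:i+n-1}\|_2,
\]
so the quantity you must control is $\lambda\,\max_{i}\|\epsilon_{i:i+n-1}\|_2$. But $\|\epsilon_{i:i+n-1}\|_2$ is the Euclidean norm of an $n$-dimensional sub-Gaussian vector, not a scalar $\sigma$-sub-Gaussian: already for a single window with i.i.d.\ $\mathcal{N}(0,\sigma^2)$ entries one has $\E\|\epsilon_{i:i+n-1}\|_2 \asymp \sigma\sqrt{n}$. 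Hence no maximal inequality over the $N-n+1$ positions can do better than order $\sigma\sqrt{n}$, and your route recovers only the componentwise rate of Theorem~\ref{thm:dependent_bound}, not the improved bound~\eqref{ineq:independent_bound}. Taking the supremum over $D'$ \emph{before} invoking the scalar maximal inequality is exactly what reintroduces the $\sqrt{n}$ you are trying to avoid; the phrase ``absorbed into a controlled sub-Gaussian bound at each position $i$'' hides a step that cannot be carried out as stated.

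The paper proceeds differently: it never takes a supremum over dictionary directions. For a \emph{fixed} unit vector $d$, joint sub-Gaussianity makes each coordinate $\langle d,\epsilon_{i:i+n-1}\rangle$ of $T_d^T\epsilon \in \R^{N-n+1}$ a $\sigma$-sub-Gaussian scalar (Lemma~\ref{lemma:sub_gaussian_convolution}), so the maximal inequality over the $N-n+1$ positions yields $\E\|T_d^T\epsilon\|_\infty \le \sigma\sqrt{2\log(2(N-n+1))}$. One then writes $\langle\epsilon, R_k * D_k\rangle = \langle T_{D_k}^T\epsilon, R_k\rangle$ and applies H\"older with the pair $(\|R_k\|_1,\|T_{D_k}^T\epsilon\|_\infty)$, obtaining the bound without a $\sqrt{n}$; the estimated-dictionary term $\langle\epsilon,(\hat R_\lambda)_k * (\hat D_\lambda)_k\rangle$ is treated the same way. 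The structural point your argument is missing is that the direction must be held fixed when the scalar maximal inequality is applied over positions; optimizing over the direction simultaneously is what costs the extra $\sqrt{n}$. (The paper's ``similarly'' for the $\hat D_\lambda$ term is itself terse, since $(\hat D_\lambda)_k$ depends on $\epsilon$, but the convolution-matrix rewriting $\langle\epsilon, r*d\rangle = \langle T_d^T\epsilon, r\rangle$ is the intended device, and it is the piece your outline does not yet contain.)
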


The main difference between the bounds~\eqref{ineq:dependent_bound} and~\eqref{ineq:independent_bound} is the presence of a $\sqrt{n}$ factor in the former. In Section~\ref{subsec:lower_bounds}, we will show that both upper bounds are essentially tight, and the minimax rates under these two noise conditions are indeed separated by a factor of $\sqrt{n}$. We also empirically verify this phenomenon in Section~\ref{sec:experiments}, and, in Section~\ref{sec:discussion}, we provide some intuition for why this occurs. Also note that, in the Appendix, we provide related results bounding the error of the penalized form~\eqref{opt:penalized_CSDL} of CSDL and also bounding error under weaker finite-moment noise conditions.

\subsection{Lower Bounds}
\label{subsec:lower_bounds}

We now present minimax lower bounds showing that the upper bound rates in Theorems~\ref{thm:dependent_bound} and \ref{thm:independent_bound} are essentially tight in terms of the sparsity $\lambda$, noise level $\sigma$, sequence length $N$, and dictionary length $n$.

First consider the componentwise sub-Gaussian case, analogous to that considered in Theorem~\ref{thm:dependent_bound}:
\begin{theorem}[Lower Bound for Componentwise sub-Gaussian Noise]
Assume the TLGM holds. Then, there exists a (Gaussian) noise pattern $\epsilon$ that is componentwise sub-Gaussian with constant $\sigma$ such that the following lower bound on the minimax average $\L_2$ reconstruction risk holds:
\begin{equation}
M(\lambda, N, n, \sigma) \geq \frac{\lambda}{8N} \min \left\{
         \lambda,
         \sigma \sqrt{n \log(N - n + 1)}
       \right\}
\label{ineq:componentwise_lower_bound}
\end{equation}
\label{thm:componentwise_lower_bound}
\end{theorem}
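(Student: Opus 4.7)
The plan is to apply Fano's inequality to a multi-hypothesis testing problem with a carefully designed adversarial noise distribution. The key observation is that, unlike joint sub-Gaussianity, componentwise sub-Gaussianity is compatible with noise whose effective dimension is only $B := \lfloor N/n \rfloor$ rather than $N$. Accordingly, I partition $\{1,\ldots,N\}$ into $B$ consecutive blocks of length $n$ and set $\epsilon_i := Z_{b(i)}$, where $Z_1,\ldots,Z_B$ are i.i.d.\ $\mathcal{N}(0,\sigma^2)$ and $b(i)$ is the block containing $i$. Each $\epsilon_i \sim \mathcal{N}(0,\sigma^2)$, so $\epsilon$ is componentwise sub-Gaussian with constant $\sigma$; the perfect within-block correlation is what ultimately produces the extra $\sqrt{n}$ relative to Theorem~\ref{thm:independent_bound}.

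For the hypothesis class, I take $K = 1$ and $D := \mathbf{1}_n / \sqrt n$. For a sparsity $s \in \{1,\ldots,B\}$ and amplitude $\mu > 0$ to be chosen, I index signals by binary vectors $\omega \in \{0,1\}^B$ with $\|\omega\|_1 = s$, setting $R^{(\omega)}_{(b-1)n+1} := \mu \omega_b$ (and $0$ elsewhere), so $\|R^{(\omega)}\|_{1,1} = s\mu$, which I enforce to be $\leq \lambda$. Then $X^{(\omega)} := R^{(\omega)} \otimes D$ is block-constant, equal to $\mu/\sqrt{n}$ on the blocks with $\omega_b = 1$ and $0$ elsewhere. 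A standard Gilbert--Varshamov packing extracts $\Omega \subseteq \{\omega : \|\omega\|_1 = s\}$ with $|\Omega| \geq \exp(c_1 s \log(B/s))$ and pairwise Hamming distance at least $s/4$.

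Because both signal and noise are block-constant, the vector of block averages $\bar Y \in \R^B$ is a sufficient statistic for $\omega$, with $\bar Y \sim \mathcal{N}\bigl((\mu/\sqrt{n})\omega,\, \sigma^2 I_B\bigr)$. Hence $\mathrm{KL}(P_\omega \| P_{\omega'}) = (\mu^2 / (2n\sigma^2)) \|\omega-\omega'\|_1 \leq s\mu^2 / (n\sigma^2)$, while the signal-space separation is $\|X^{(\omega)} - X^{(\omega')}\|_2^2 = \mu^2 \|\omega-\omega'\|_1 \geq \mu^2 s / 4$. Whenever the Fano condition $s\mu^2 / (n\sigma^2) \leq \alpha \log|\Omega|$ holds (for some $\alpha < 1$ and $|\Omega|$ large enough), this yields the minimax lower bound
$$\inf_{\hat X} \sup_{\omega \in \Omega} \E \bigl\|X^{(\omega)} - \hat X\bigr\|_2^2 \;\gtrsim\; \mu^2 s,$$
via the usual reduction $\hat\omega(\hat X) := \arg\min_\omega \|\hat X - X^{(\omega)}\|$ and the triangle inequality.

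It finally remains to maximize $\mu^2 s$ subject to $s\mu \leq \lambda$, $s \geq 1$, and $\mu^2 \lesssim n\sigma^2 \log(B/s)$. In the small-$\lambda$ regime $\lambda \lesssim \sigma\sqrt{n\log B}$, the choice $s = 1$, $\mu = \lambda$ gives $\mu^2 s = \lambda^2$, yielding the first branch of the $\min$. In the large-$\lambda$ regime, saturating the Fano constraint with $\mu \asymp \sigma\sqrt{n\log(B/s)}$ and $s := \lambda/\mu$ gives $\mu^2 s = \lambda \mu \asymp \lambda\sigma\sqrt{n\log(N/n)}$, matching the second branch. Dividing by $N$, together with the identification $\log(N/n) \asymp \log(N-n+1)$ (in the nontrivial regime $n \leq N/2$), recovers the claimed bound. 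The main obstacle I anticipate is precisely this two-regime optimization: a single-spike ($s = 1$) Fano argument alone only yields $\min\{\lambda^2, n\sigma^2 \log B\}/N$, which is \emph{constant} in $\lambda$ once $\lambda \gg \sigma\sqrt{n \log B}$ rather than linear; extracting the linear-in-$\lambda$ rate therefore requires the multi-spike Gilbert--Varshamov packing and a careful balance among Hamming separation, KL radius, and the $L_1$ budget $s\mu \leq \lambda$.
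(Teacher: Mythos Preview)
Your approach differs substantially from the paper's. The paper does not run Fano's inequality or build a Gilbert--Varshamov packing directly; instead it \emph{reduces} CSDL to the classical $\L_1$-constrained Gaussian sequence model and invokes the known lower bound for that problem (their Lemma~10). With the same dictionary $D = \mathbf{1}_n/\sqrt{n}$, the paper takes $\zeta \sim \mathcal{N}(0, n\sigma^2 I_{N-n+1})$, sets $\epsilon = T_D \zeta$ (a moving average of $\zeta$, not block-constant), and argues that any CSDL reconstruction $\hat X$ induces a sequence estimator $\hat R = T_D^{-1}\Pi_{\mathcal{I}}(\hat X)$ satisfying $\|\hat R - R\|_2 \leq \|\hat X - X\|_2$, via the norm inequality $\|T_D y\|_2 \geq \|y\|_2$ for $y \geq 0$. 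Because $\zeta$ lives in $\R^{N-n+1}$, this route yields the $\log(N-n+1)$ factor, with the $\sqrt{n}$ coming from the inflated variance $n\sigma^2$ of $\zeta$. Your block-constant construction, by contrast, makes the origin of the $\sqrt{n}$ very transparent (per-block signal-to-noise ratio $\mu/(\sqrt{n}\sigma)$), and the sufficient-statistic reduction to $\bar Y \in \R^B$ is clean and elementary.

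There is, however, a genuine gap in your last step. Your claim that $\log(N/n) \asymp \log(N-n+1)$ ``in the nontrivial regime $n \leq N/2$'' is false: at $n = N/2$ the left side is $\log 2$ while the right side is $\log(N/2+1)$, and more generally the two are comparable only when $\log n = o(\log N)$. The block partition has only $B = \lfloor N/n\rfloor$ degrees of freedom, so your packing entropy---and hence the Fano bound---can never exceed a constant times $\log(N/n)$. To recover the stated $\log(N-n+1)$ you would need the atom to sit at any of the $N-n+1$ shifts, not just the $B$ block-aligned ones; but then adjacent hypotheses overlap in support and your block-constant noise no longer yields a clean sufficient statistic (within-block differences of $Y$ become noise-free and informative). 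As written, your argument proves a correct but strictly weaker bound, with $\log(N/n)$ in place of $\log(N-n+1)$; it matches the theorem only in the regime $n \leq N^{1-c}$ for some fixed $c>0$.
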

The $\min$ here reflects the fact that, in the extremely sparse or noisy regime $\lambda \leq \sigma \sqrt{n \log(N - n + 1)}$, the trivial estimator $\hat \theta = 0$ becomes optimal, with average $\L_2$-risk at most $\lambda^2/N$.
Except in this extremely sparse/noisy case, this minimax bound essentially matches the upper bound provided by Theorem~\ref{thm:dependent_bound}.

Rather than directly utilizing information theoretic bounds such as Fano's inequality, our lower bounds are based on reducing the classical $\L_1$-constrained Gaussian sequence estimation problem (see, e.g., Section 2.3 of \citet{rigollet15HiDStatsNotes}) to CSDL (i.e., showing that an estimator for the prescribed CSDL problem can be used to construct a estimator for the mean of an $\L_1$-constrained Gaussian sequence such that the error of the Gaussian sequence estimator is bounded in terms of that of the CSDL estimator). Standard lower bounds for the $\L_1$-constrained Gaussian sequence problem (e.g., Corollary 5.16 of \citet{rigollet15HiDStatsNotes}) then directly imply a lower bound for the CSDL estimator. Again, detailed proofs are provided in the Appendix.

Now consider the jointly sub-Gaussian case, analogous to that considered in Theorem~\ref{thm:independent_bound}:
\begin{theorem}
{\it (Lower Bound for Jointly sub-Gaussian Noise):} Assume the TLGM holds, and suppose that $\epsilon \sim \mathcal{N}(0, \sigma^2 I_N)$, so that $\epsilon$ is jointly sub-Gaussian with constant $\sigma$. Then, the following lower bound on the minimax average $\L_2$ reconstruction risk holds:
\begin{equation}
M(\lambda, N, n, \sigma) \geq \frac{\lambda}{8N} \min \left\{
         \lambda,
         \sigma \sqrt{\log(N - n + 1)}
       \right\}
\label{ineq:joint_lower_bound}
\end{equation}
\label{thm:joint_lower_bound}
\end{theorem}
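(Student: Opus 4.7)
The plan is to follow the same reduction strategy outlined by the authors for Theorem~\ref{thm:componentwise_lower_bound}: embed the classical $\L_1$-constrained Gaussian sequence problem into CSDL and then invoke its well-known minimax lower bound. Because $\epsilon \sim \mathcal{N}(0, \sigma^2 I_N)$ is itself the noise appearing in the Gaussian sequence model, no noise-inflation trick is needed here, and the bound avoids the extra $\sqrt{n}$ factor present in the componentwise case.

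First, I would specialize to $K = 1$ (padding $R$ and $D$ with zero columns if a larger $K$ is required) and take $D_1 = e_1 \in \R^n$, so that $\|D\|_{2,\infty} = 1$ and hence $(R, D) \in \S$ for every $R$. A direct unwinding of the discrete convolution sum gives $(R_1 * e_1)_j = R_{1,j}$ for $j \in \{1, \ldots, N - n + 1\}$ and $(R_1 * e_1)_j = 0$ for $j \in \{N - n + 2, \ldots, N\}$. Writing $\theta := R_1$ and $m := N - n + 1$, the TLGM restricted to this subfamily reads $Y = (\theta, 0, \ldots, 0) + \epsilon$, and $(R, D) \in \S_\lambda$ iff $\|\theta\|_1 \leq \lambda$.

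Second, I would reduce arbitrary CSDL estimators to Gaussian-sequence estimators. Given any $\hat X \colon \R^N \to \R^N$, define $\hat\theta$ as follows: on input $z = \theta + w$ with $w \sim \mathcal{N}(0, \sigma^2 I_m)$, draw an independent $\xi \sim \mathcal{N}(0, \sigma^2 I_{n-1})$, form $Y := (z, \xi) \in \R^N$, and output $(\hat X(Y))_{1:m}$. Under the CSDL instance above, $Y$ has exactly the law of $X + \epsilon$, and since $X_j = 0$ for $j > m$,
\[\E \|\hat\theta - \theta\|_2^2
  = \sum_{j=1}^m \E (\hat X_j - X_j)^2
  \leq \E \|\hat X - X\|_2^2.\]
Taking supremum over $\|\theta\|_1 \leq \lambda$ and infimum over $\hat X$ yields
\[N \cdot M(N, n, \lambda, \sigma)
  \geq \inf_{\hat\theta} \sup_{\|\theta\|_1 \leq \lambda} \E \|\hat\theta - \theta\|_2^2.\]

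Finally, I would apply the standard minimax lower bound for the $\L_1$-constrained Gaussian sequence problem (e.g., Corollary~5.16 of \citet{rigollet15HiDStatsNotes}),
\[\inf_{\hat\theta} \sup_{\|\theta\|_1 \leq \lambda} \E \|\hat\theta - \theta\|_2^2
  \geq \frac{\lambda}{8} \min\left\{\lambda, \sigma \sqrt{\log m}\right\},\]
substitute $m = N - n + 1$, and divide by $N$ to obtain the claimed inequality. The only non-mechanical element is the Step~1 choice of dictionary: taking $D_1 = e_1$ collapses multi-convolution to an identity-then-zero-pad map, making the CSDL model class literally contain the Gaussian-sequence class. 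Everything else---the convolution identity, the law-preserving coupling, and the off-the-shelf Gaussian-sequence bound---is bookkeeping, so I do not anticipate a substantive technical obstacle.
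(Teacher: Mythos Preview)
Your proposal is correct and follows essentially the same reduction as the paper: fix the single-atom dictionary $D = e_1$ so that multi-convolution becomes embed-then-zero-pad, and invoke the $\L_1$-constrained Gaussian sequence lower bound (Lemma~\ref{lemma:Gaussian_sequence_model_lower_bound}). Your coupling---padding the $(N-n+1)$-dimensional observation with independent $\mathcal{N}(0,\sigma^2)$ noise in the last $n-1$ coordinates---is in fact slightly cleaner than the paper's construction (which feeds $\hat X$ the noise $T_D\zeta$, identically zero in those coordinates), since yours matches the stated hypothesis $\epsilon \sim \mathcal{N}(0,\sigma^2 I_N)$ exactly rather than merely producing a jointly sub-Gaussian $\epsilon$ with constant $\sigma$.
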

Again, the $\min$ here reflects the fact that, in the extremely sparse or noisy regime $\lambda \leq \sigma \sqrt{\log(N - n + 1)}$, the trivial estimator $\hat \theta = 0$ becomes optimal.
Except in this extreme case, this minimax bound essentially matches the upper bound provided by Theorem~\ref{thm:independent_bound}.

\subsection{Comparison to IID SDL}
\label{sec:IID_SDL_comparison}
As mentioned previously, the IID SDL algorithm~\eqref{opt:classical_dict_learn} has historically been applied to problems with spatial or temporal structure better modeled by a TLGM~\eqref{eq:TLGM_model} than by an LGM~\eqref{eq:LGM}, by means of partitioning the data into patches.
In this section, we consider how the statistical performance of IID SDL compares with that of CSDL, under the TLGM. For simplicity, we consider the case of componentwise sub-Gaussian noise; conclusions under joint sub-Gaussianity are similar.
For clarity, in this section, notation used according to the LGM/IID SDL setting will be denoted with the prime mark $'$, while other quantities should be interpreted as in the TLGM/CSDL setting.\footnote{In this section, $\lambda'$ should not be confused with the tuning parameter of penalized CSDL, also denoted $\lambda'$.}

The next result is an analogue of Theorem~\ref{thm:dependent_bound} for IID SDL under the LGM; the proof is similar, with Young's inequality for convolutions replaced by a simple linear bound.
\begin{theorem}[Upper Bound for IID SDL]
Assume the LGM holds, suppose the noise $\epsilon$ is componentwise sub-Gaussian with constant $\sigma$ (i.e., for each dimension $j \in [d']$, $\epsilon_j \in \R^{N'}$ is componentwise sub-Gaussian with constant $\sigma$), and let the constrained IID SDL parameter $\lambda'$ satisfy $\lambda' \geq \|R'\|_{1,1}$. Then, the reconstruction estimate $\hat X_{\lambda'}' = \hat R_{\lambda'}' \hat D_{\lambda'}'$ satisfies
\begin{equation}
\frac{1}{N'd'} \|X - \hat X_{\lambda'}'\|_{2,2}^2
  \leq \frac{4 \lambda' \sigma \sqrt{2d' \log(2N'd)}}{N'd'}
\label{ineq:IID_SDL_bound}
\end{equation}
\label{thm:IID_SDL_bound}
\end{theorem}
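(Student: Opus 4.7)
The plan is to mirror the proof of Theorem~\ref{thm:dependent_bound} almost verbatim, substituting matrix multiplication for multi-convolution everywhere. First I would establish the IID analogue of Lemma~\ref{lemma:oracle_inequality}: since $(\hat R', \hat D')$ is the constrained minimizer and the true $(R', D')$ is feasible, expanding $\|Y - \hat R' \hat D'\|_{2,2}^2 \leq \|Y - R'D'\|_{2,2}^2$ around $Y = X + \epsilon$ gives
\[
\|X - \hat R' \hat D'\|_{2,2}^2
\;\leq\; \|X - R'D'\|_{2,2}^2 + 2\langle \epsilon, \hat R'\hat D' - R'D'\rangle,
\]
and under the LGM the first term vanishes, reducing the problem to uniformly bounding $\langle \epsilon, RD \rangle$ over $(R,D) \in \S_{\lambda'}$.

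For the statistical term, the analogue of Young's inequality for convolutions $\|R_k * D_k\|_1 \leq \|R_k\|_1\|D_k\|_1$ used in Theorem~\ref{thm:dependent_bound} is the \emph{exact} identity for rank-one outer products: writing $RD = \sum_k R_{\cdot,k} D_{k, \cdot}$, one has $\|R_{\cdot,k} D_{k,\cdot}\|_{1,1} = \|R_{\cdot,k}\|_1 \|D_{k,\cdot}\|_1$. Summing and applying Hölder's inequality (pairing entrywise-$\L_1$ against entrywise-$\L_\infty$) yields
\[
|\langle \epsilon, RD\rangle|
  \leq \|\epsilon\|_{\infty,\infty}\, \|RD\|_{1,1}
  \leq \|\epsilon\|_{\infty,\infty}\, \|R\|_{1,1}\,\max_k \|D_{k,\cdot}\|_1.
\]
Interpreting the normalization $\|D\|_{2,\infty}\leq 1$ so that each dictionary atom (row of $D$) has $\L_2$-norm at most $1$, Cauchy--Schwarz gives $\max_k \|D_{k,\cdot}\|_1 \leq \sqrt{d'}\max_k\|D_{k,\cdot}\|_2 \leq \sqrt{d'}$, so the bound becomes $\sqrt{d'}\lambda'\|\epsilon\|_{\infty,\infty}$ uniformly over $\S_{\lambda'}$.

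Finally, I would apply the standard sub-Gaussian maximal inequality: since all $N'd'$ entries of $\epsilon$ are componentwise sub-Gaussian with constant $\sigma$, $\E[\|\epsilon\|_{\infty,\infty}] = \E[\max_{i,j}|\epsilon_{ij}|] \leq \sigma\sqrt{2\log(2N'd')}$. Applying the uniform bound to both $(\hat R', \hat D')$ and $(R', D')$ via the triangle inequality contributes a factor of two, which combines with the factor of two from the oracle inequality to give
\[
\E[\|X - \hat X'_{\lambda'}\|_{2,2}^2]
  \leq 4\lambda' \sigma \sqrt{2d'\log(2N'd')},
\]
and dividing by $N'd'$ yields~\eqref{ineq:IID_SDL_bound}.

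There is no real obstacle; the only minor subtlety is recognizing the correct analogue of Young's inequality (the outer-product identity above) and ensuring the $\|D\|_{2,\infty}\le 1$ constraint is read so that dictionary atoms themselves — not the $d'$ coordinate slices across atoms — have bounded $\L_2$-norm, which is what makes the $\sqrt{d'}$ (rather than $d'$) factor appear and matches the stated bound. This is why the proof ``follows Theorem~\ref{thm:dependent_bound} with Young's inequality replaced by a simple linear bound,'' as remarked in the paper.
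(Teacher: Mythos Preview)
Your proposal is correct and follows essentially the same route as the paper's proof: oracle inequality reducing to the Frobenius inner product with $\epsilon$, H\"older pairing $\|\cdot\|_{\infty,\infty}$ against $\|\cdot\|_{1,1}$, the bound $\|RD\|_{1,1}\le \sqrt{d'}\,\|R\|_{1,1}$ obtained from the rank-one decomposition together with $\|D_{k,\cdot}\|_1\le\sqrt{d'}\|D_{k,\cdot}\|_2\le\sqrt{d'}$, and the sub-Gaussian maximal inequality over the $N'd'$ entries of $\epsilon$. Your remark about reading $\|D\|_{2,\infty}\le1$ as normalizing the atoms (rows of $D\in\R^{K\times d'}$) matches how the paper uses it in the proof.
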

The natural conversion of parameters from the TLGM to the LGM sets $d' = n$ and $N' = N/n$. At first glance, plugging these into~\eqref{ineq:IID_SDL_bound} appears to give the same rate as~\eqref{ineq:dependent_bound}. The catch is the condition that $\lambda' \geq \|R'\|_{1,1}$. When converting data from the TLGM to the format of the LGM, the sparsity level can grow by a factor of up to $n$; that is, in the worst case, we can have $\|R'\|_{1,1} = n\|R\|_{1,1}$. Therefore, the bound can increase by a factor of $n$, relative to the bound for CSDL. From a statistical perspective, this may explain the superior performance of CSDL over IID SDL, when data is better modeled by the TLGM than by the LGM,  especially when the patterns being modeled in the data are relatively large.

\section{Empirical Results}
\label{sec:experiments}
In this section, we present numerical experiments on synthetic data, with the goal of verifying the convergence rates derived in the previous section. MATLAB code for reproducing these experiments and figures is available at \url{https://github.com/sss1/convolutional-dictionary}. Since the focus of this paper is on the statistical, rather than algorithmic, properties of CSDL, we assume the estimator $(\hat R_\lambda, \hat D_\lambda)$ defined by the optimization problem~\eqref{opt:constrained_CSDL} can be computed to high precision using a simple alternating projected gradient descent algorithm, the details of which are provided in the appendix. We then use the reconstruction estimate $\hat X_\lambda := \hat R_\lambda \otimes \hat D_\lambda$ to estimate $X = R \otimes D$. All results presented are averaged over $1000$ IID trials, between which $R$ and $S$ were regenerated randomly.\footnote{We initially plotted $95\%$ confidence intervals for each point based on asymptotic normality of the empirical $\L_2$ error.
Since intervals were typically smaller than markers sizes, we removed them to avoid clutter.} 
In each trial, the $K$ dictionary elements (columns of $D$) are sampled independently and uniformly from the $\L_2$ unit sphere in $\R^n$.
$R$ is generated by initializing $R = 0 \in \R^{(N - n + 1) \times K}$ and then repeatedly adding $1$ to uniformly random coordinates of $R$ until the desired value of $\|R\|_{1,1}$ is achieved. Further details of the experiment setup and implementation are given in the Appendix.

{\it Comparisons:}
We compare the error of the optimal CSDL estimator $\hat X_\lambda$ to the theoretical upper bounds (Inequalities~\eqref{ineq:dependent_bound} and \eqref{ineq:independent_bound}) and lower bounds (Inequalities~\eqref{ineq:joint_lower_bound} and \eqref{ineq:componentwise_lower_bound}), as well as the trivial estimators $\hat X_0 = 0$ and $\hat X_\infty = Y$.

\begin{experiment}
\label{experiment:1}
Our first experiment studies the relationship between the length $N$ of the sequence and the true $\L_{1,1}$-sparsity $\|R\|_{1,1}$ of the data.
Figure~\ref{fig:exp1} shows error as a function of $N$ for logarithmically spaced values between $10^2$ and $10^4$, with $\|R\|_{1,1}$ scaling as constant $\|R\|_{1,1} = 5$, square-root $\|R\|_{1,1} = \left\lfloor \sqrt{N} \right\rfloor$, and linear $\|R\|_{1,1} = \lfloor N/10 \rfloor$ functions of $N$.
The results are consistent with the main predictions of Theorems~\ref{thm:independent_bound} and \ref{thm:joint_lower_bound} for jointly sub-Gaussian noise, namely that the error of the CSDL estimator using the optimal tuning parameter $\lambda = \|R\|_{1,1}$ lies between the lower and upper bounds, and converges at a rate of order $\|R\|_{1,1}/N$, up to $\log$ factors).
As a result, the estimator is inconsistent when $\|R\|_{1,1}$ grows linearly with $N$, in which case there is no benefit to applying CSDL to denoise the sequence over using the original sequence $\hat X_\infty = X$, even though the latter is never consistent.\footnote{Even if $\|R\|_{1,1}$ grows linearly with $N$, as long as $\|R\|_{1,1}/N$ is small, CSDL may still be useful for compression, if a constant-factor loss is acceptable.}
On the other hand, if $\|R\|_{1,1}$ scales sub-linearly with $N$, CSDL is consistent and outperforms both trivial estimators (although, of course, the trivial estimator $\hat X_0 = 0$ is also consistent in this setting).
In the appendix, we present an analogous experiment in the heavy-tailed case, where the noise $\epsilon$ has only $2$ finite moments.
\begin{figure}
\centering
\includegraphics[width=\linewidth,trim={7mm 0 17mm 0},clip]{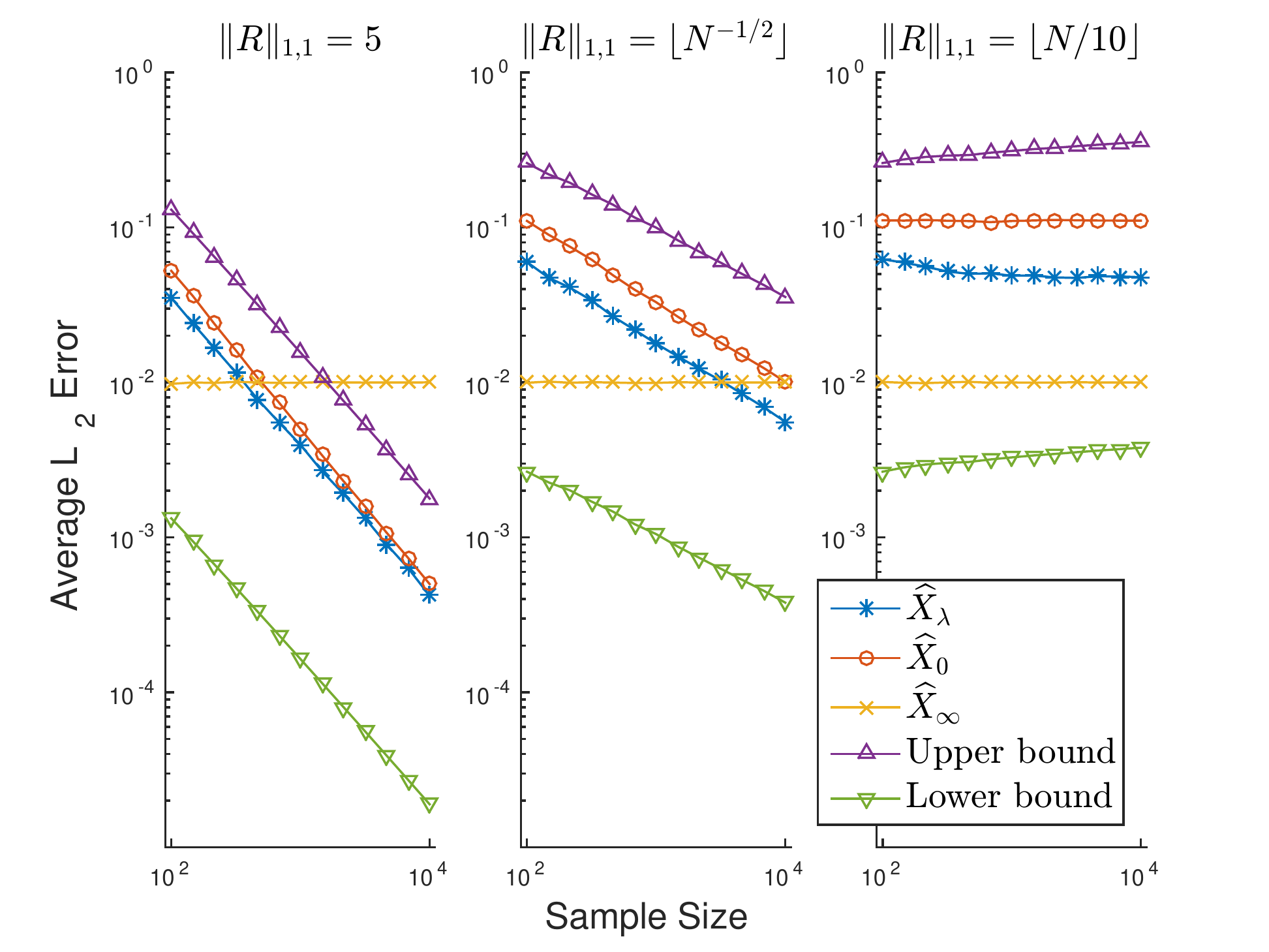}
\caption{Experiment 1: Average $\L_2$-error as a function of sequence length $N$, with sparsity scaling as $\|R\|_{1,1} = 5$ (first panel), $\|R\|_{1,1} = \left\lfloor \sqrt{N} \right\rfloor$ (second panel), and $\|R\|_{1,1} = \lfloor N/10 \rfloor$ (third panel).}
\label{fig:exp1}
\end{figure}
\end{experiment}

\begin{experiment}
\label{experiment:2}
\begin{figure*}
\centering
\subfloat[]{\includegraphics[width=0.5\linewidth,trim={5mm 0 15mm 0},clip]{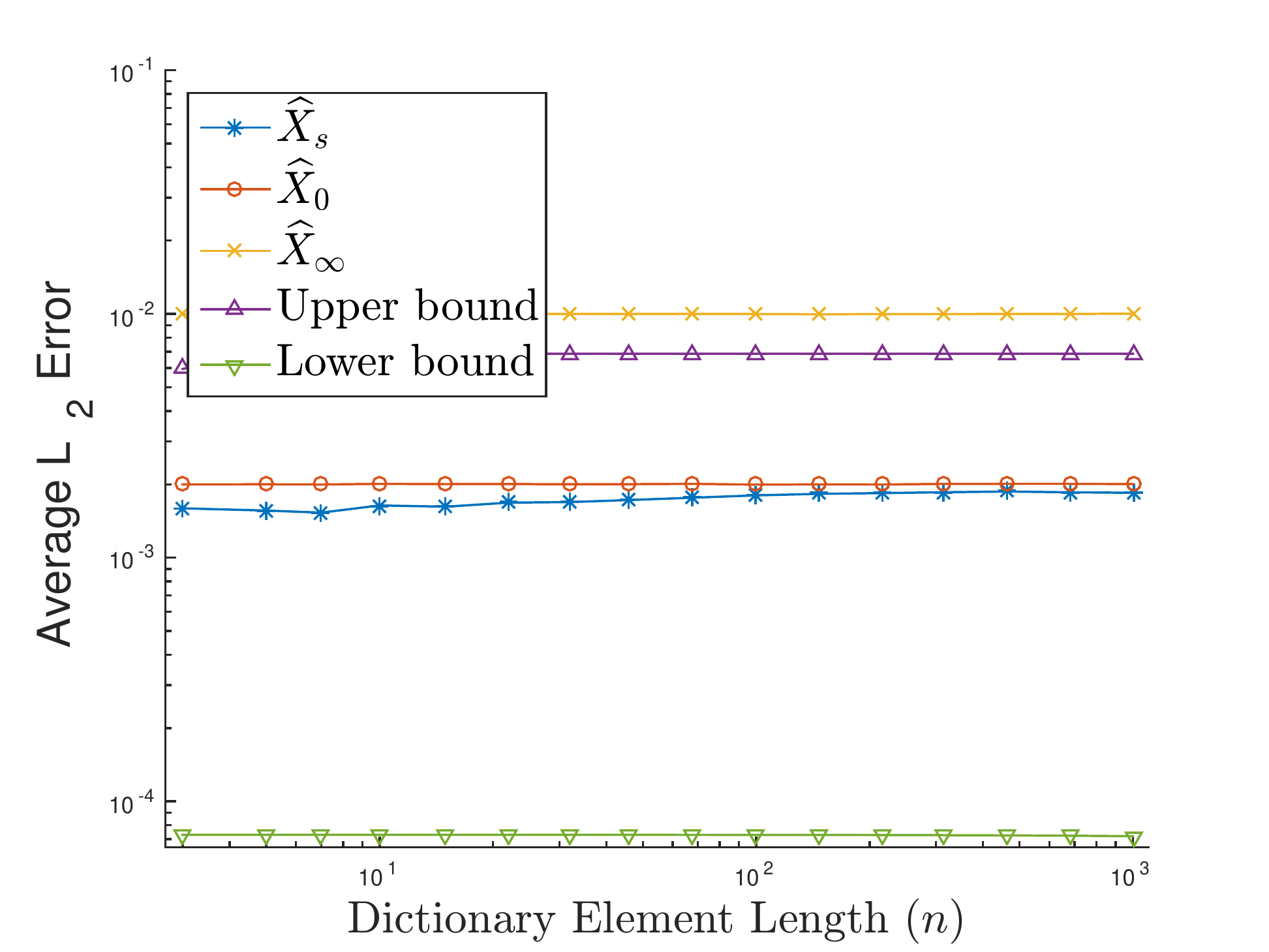}\label{subfig:exp2a}}
\subfloat[]{\includegraphics[width=0.5\linewidth,trim={5mm 0 15mm 0},clip]{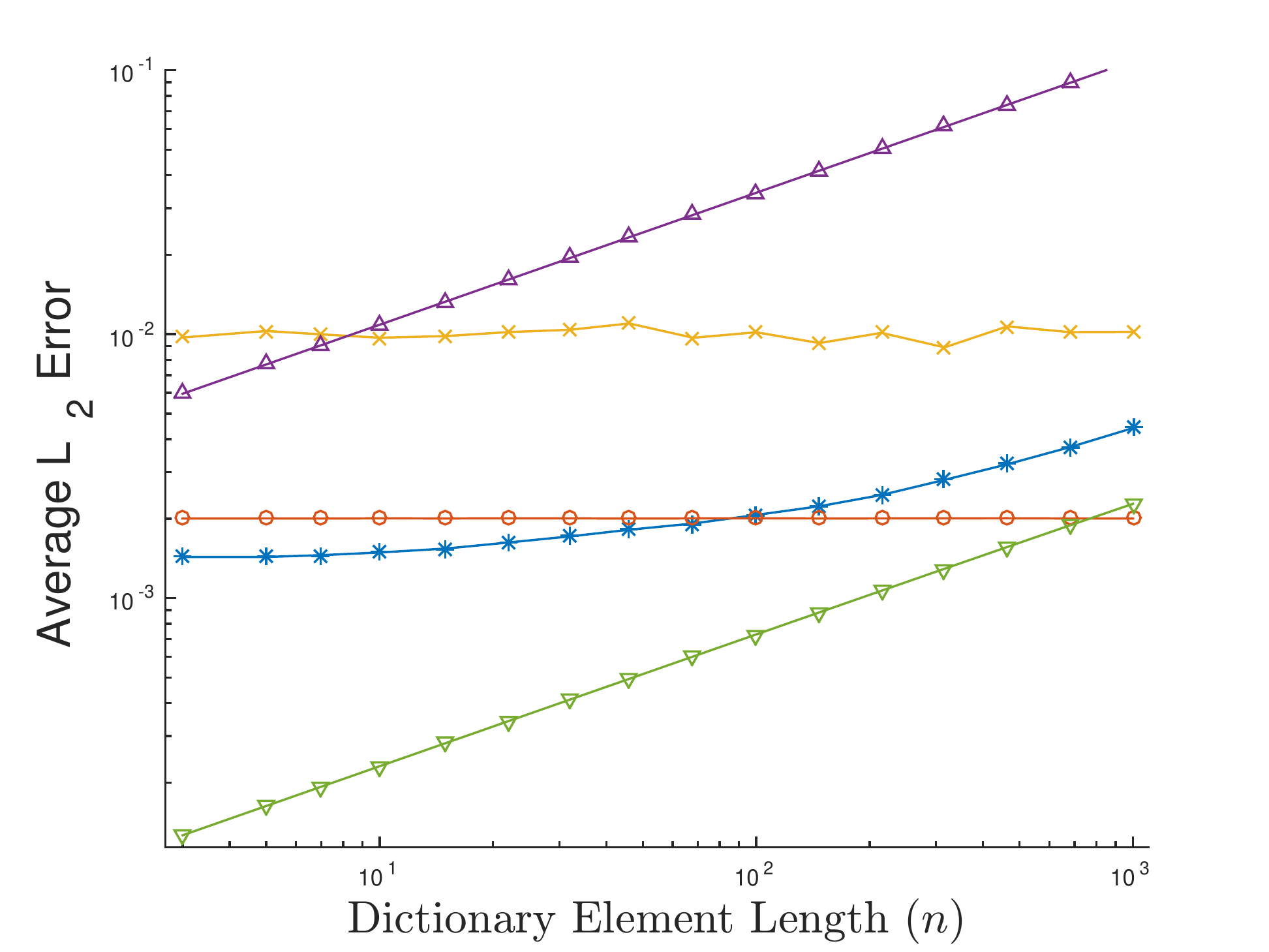}\label{subfig:exp2b}}
  \caption{Experiment~\nolink{\ref{experiment:2}}: Average $\L_2$-error as a function of dictionary element length $n$, when \ref{subfig:exp2a} noise is independent across the input and \ref{subfig:exp2b} noise is perfectly correlated across the input.}
\label{fig:exp2}
\end{figure*}

Our second experiment studies the dependence of the error on the length $n$ of the dictionary elements, and how this varies with the dependence structure of the noise $\epsilon$. Specifically, we considered two ways of generating the noise $\epsilon$: (1) IID Gaussian entries (as in other experiments), and (2) perfectly correlated Gaussian entries (i.e., a single Gaussian sample was drawn and added to every entry of $R \otimes D$. In the former case, Theorems~\ref{thm:independent_bound} and \ref{thm:joint_lower_bound} suggest a convergence rate independent of $n$, whereas, in the latter case, Theorems~\ref{thm:dependent_bound} and \ref{thm:componentwise_lower_bound} suggest a rate scaling as quickly as $\sqrt{n}$. To allow a larger range of values for $n$, we fixed a larger value of $N = 5000$ for this experiment. Figure~\ref{fig:exp2} shows error as a function of $n$ for logarithmically spaced values between $10^{1/2}$ and $10^3$, in the cases of independent noise and perfectly correlated noise. As predicted, the error of the CSDL estimator using the optimal tuning parameter $\lambda = \|R\|_{1,1}$ lies between the lower and upper bounds, and exhibits worse scaling in the case of dependent noise.
\end{experiment}

\section{Discussion}
\label{sec:discussion}
Theorems~\ref{thm:dependent_bound}, \ref{thm:independent_bound}, \ref{thm:componentwise_lower_bound}, and \ref{thm:joint_lower_bound} together have several interesting consequences. Firstly, in the fairly broad setting of componentwise sub-Gaussian noise, for fixed $K$, the minimax average $\L_2$ risk for CSDL reconstruction in the case of sub-Gaussian noise is, up to constant factors, of order
\[M(\lambda, N, n, \sigma)
        \asymp \frac{\lambda}{N} \min \left\{
          \lambda,
          \sigma \sqrt{n \log N}
        \right\}.\]
Under a stronger assumption of joint sub-Gaussianity (e.g., independent noise), the minimax risk becomes
\[M(\lambda, N, n, \sigma)
  \asymp \frac{\lambda}{N} \min \left\{
           \lambda,
           \sigma \sqrt{\log N}
         \right\}.\]
In retrospect, the presence of an additional $\sqrt{n}$ factor in error under componentwise sub-Gaussianity is quite intuitive.
Dictionary learning consists, approximately, of performing compressed sensing in an (unknown) basis of greatest sparsity.
Componentwise sub-Gaussianity bounds the noise level in the original (data) basis, whereas joint sub-Gaussianity bounds the noise level in \emph{all} bases.
Hence, the componentwise noise level $\sigma$ may be amplified (by factor of up to $\sqrt{n}$) when transforming to the basis in which the data are sparsest.
Perhaps surprisingly, the fact that the dictionary $D$ is unknown does not affect these rates; our lower bounds are both derived for fixed choices of $D$.
A similar phenomenon has been observed in IID SDL, where \citet{arora14newAlgorithms} showed upper bounds that are of the same rate as lower bounds for the case where the dictionary is known beforehand (the classic problem of \emph{sparse recovery}).

For $n \ll N$, an important consequence is that CSDL (and, similarly, IID SDL when $d \ll N$) is robust to arbitrary noise dependencies. This aspect of SDL is relatively unique amongst compressed sensing problems, and has important practical implications, explaining why these algorithms perform well for image, speech, or other highly dependent data.

On the negative side, our lower bounds imply that a high degree of sparsity is required to guarantee consistency.
Specifically, under sub-Gaussian noise, for fixed $n$ and $\sigma$, guaranteeing consistency requires $\lambda \in o \left( \frac{N}{\log N} \right)$. Our additional results in the Appendix suggest that an even higher degree (polynomially in $n$) of sparsity is required when the noise has tails that are heavier than Gaussian.

Our results focused on error measured in $\L_2$-loss, but they also have some consequences for rates under different losses. In particular, for $p \geq 2$, since the $\L_2$-norm on $\R^N$ dominates the $\L_p$-norm, our upper bounds extend to error measured in $\L_p$-loss. Similarly, for $p \in [1,2]$, since the $\L_p$-norm dominates the $\L_2$-norm, our lower bounds extend to error measured in $\L_p$-loss. On the other hand, further work is needed to obtain tight lower bounds for $p > 1$ and to obtain tight upper bounds for $p \in [1,2)$.

Our results rely on choosing the tuning parameter $\lambda$ well ($\lambda \approx \|R\|_{1,1}$). In the Appendix, we show that similar rates hold for the penalized form~\eqref{opt:penalized_CSDL} of CSDL, but that the tuning parameter $\lambda'$ should be chosen proportional to the noise level $\sigma$ (independent of $\|R\|_{1,1}$). Depending on the application and domain knowledge, it may be easier to estimate $\sigma$ or $\|R\|_{1,1}$, which may influence the choice of whether to use the constrained or penalized estimator (in conjunction with possible computational advantages of either form).

Finally, we reiterate that, while precise quantitative bounds are more difficult to study, our results extend beyond the TLGM; in general, the $\L_1$-constrained estimator converges to the optimal $\lambda$-sparse convolutional representation of $X$.

\subsection{Future Work}

There remain several natural open questions about the statistical properties of CSDL. 
First, how do rates extend to the case of multi-channel data $X \in \R^{N \times d}$? There are multiple possible extensions of CSDL to this case; the simplest is to make $R \in \R^{(N - n + 1) \times K \times d}$ and $D \in \R^{n \times K \times d}$ each $3$-tensors and learn separate dictionary and encoding matrices in each dimension, but another interesting approach may be to keep $R \in \R^{(N - n + 1) \times K}$ as a matrix and to make $D \in \R^{n \times K \times d}$ a $3$-tensor (and to generalize the multi-convolution operator appropriately), such that the positions encoded by $R$ are shared across dimensions, while different dictionary elements are learned in each dimension. Though a somewhat more restrictive model, this latter approach would have the advantage that statistical risk would \emph{decrease} with $d$, as data from multiple dimensions could contribute to the difficult problem of estimating $R$.

Another direction may be to consider a model with secondary spatial structure, such as correlations between occurrences of dictionary elements; for example, in speech data, consecutive words are likely to be highly dependent. This might be better modeled in a Bayesian framework, where $R$ is itself randomly generated with a certain (unknown) dependence structure between its columns.

Finally, while this work contributes to understanding the statistical properties of convolutional models, more work is needed to relate these results on sparse convolutional dictionaries to the hierarchical convolutional models that underlie state-of-the-art neural network methods for a variety of natural data, ranging from images to language and genomics\citep{krizhevsky12imagenet,lecun15CNNs,zhou15deepsea,singh16SPEID}. In this direction, \citet{papyan2017convolutional} very recently began making progress by extending the theoretical results of \citet{papyan17CSDLTheory} to multilayer convolutional neural networks.

\subsubsection*{Acknowledgements}
This material is based upon work supported by the NSF Graduate Research Fellowship DGE-1252522 (to S.S.).
The work was also partly supported by NSF IIS-1563887, Darpa D3M program, and AFRL (to B.P.), 
and NSF IIS-1717205 and NIH HG007352 (to J.M.).
Any opinions, findings, and conclusions or recommendations expressed in this material are those of the author(s) and do not necessarily reflect the views of the NSF and the NIH.

\bibliography{biblio}
\bibliographystyle{abbrvnat}

\appendix

\section{Lemmas}

In this section, we collect, for easy reference, all lemmas that will be used in the proofs of our main results.

\begin{customlemma}{\ref{lemma:oracle_inequality}}[CSDL Oracle Bound]
Suppose $Y = X + \epsilon \in \R^N$.
Then, for any $\left( R, D \right) \in \S_\lambda$,
\begin{align}
\notag
& \| X - \hat R_\lambda \otimes \hat D_\lambda \|_2^2 \\
\label{ineq:oracle_inequality_appendix}
& \leq \| X -  R \otimes D \|_2^2 + 2\langle \epsilon, \hat R_\lambda \otimes \hat D_\lambda - R \otimes D \rangle.
\end{align}
\end{customlemma}
\begin{proof}
Let $(R,D) \in \S_\lambda$. Then, since $\left( \hat R_\lambda, \hat D_\lambda \right)$ minimizes the optimization problem~\eqref{opt:constrained_CSDL} for which $(R,D)$ is feasible,
\[\langle Y - \hat R_\lambda \otimes \hat D_\lambda, R \otimes D - \hat R_\lambda \otimes \hat D_\lambda \rangle
  \leq 0.\]
Rearranging this and using the fact that $Y = X + \epsilon$,
\[\langle X - \hat R_\lambda \otimes \hat D_\lambda, R \otimes D - \hat R_\lambda \otimes \hat D_\lambda \rangle
  \leq \langle \epsilon, \hat R_\lambda \otimes \hat D_\lambda - R \otimes D \rangle\]
Multiplying by $2$ and rewriting the left inner product by the polarization identity\footnote{Recall that, for any norm $\|\cdot\|$ induced by an inner product $\langle \cdot, \cdot \rangle$, $\|a\|^2 + \|b\|^2 - \|a - b\|^2 = 2\langle a, b \rangle$.} gives
\begin{align*}
& \| X - \hat R_\lambda \otimes \hat D_\lambda \|_2^2
  + \| R \otimes D - \hat R_\lambda \otimes \hat D_\lambda \|_2^2 \\
& \leq \| X -  R \otimes D \|_2^2 + 2\langle \epsilon, \hat R_\lambda \otimes \hat D_\lambda - R \otimes D \rangle,].
\end{align*}
Since $\| R \otimes D - \hat R_\lambda \otimes \hat D_\lambda \|_2^2 \geq 0$, this implies~\eqref{ineq:oracle_inequality_appendix}.
\end{proof}

We now note a well-known bound on the expected maximum of sub-Gaussian random variables:
\begin{lemma}[Sub-Gaussian Maximal Inequality]
Suppose an $\R^N$-valued random variable $\epsilon$ is componentwise sub-Gaussian with constant $\sigma$. Then,
\[\E \left[ \|\epsilon\|_\infty \right]
  \leq \sigma \sqrt{2\log(2N)}\]
and, for all $t > 0$,
\[\pr \left[ \|\epsilon\|_\infty > t \right]
  \leq 2 N \exp \left( -\frac{t^2}{2\sigma^2} \right).\]
Equivalently, for all $\delta > 0$,
\[\pr \left[ \|\epsilon\|_\infty > \sigma \sqrt{2 \log \left( \frac{2N}{\delta} \right)} \right]
  \leq \delta.\]
\label{lemma:subgaussian_maximal_inequality}
\end{lemma}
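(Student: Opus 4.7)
The plan is to prove this textbook maximal inequality in two steps: first establish the tail bound via a Chernoff-plus-union-bound argument, and then derive the expectation bound from the moment generating function (MGF) via Jensen's inequality, rather than by integrating the tail bound (which would yield a looser constant than the stated $\sqrt{2\log(2N)}$).

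For the tail bound, I would apply the Chernoff method componentwise. The sub-Gaussian MGF bound $\E[e^{t\epsilon_i}] \leq e^{t^2\sigma^2/2}$ is symmetric in $t$, so it applies to both $\epsilon_i$ and $-\epsilon_i$; for each component and each sign, the standard Markov+optimization argument gives $\pr[\pm\epsilon_i > t] \leq e^{-t^2/(2\sigma^2)}$, so $\pr[|\epsilon_i| > t] \leq 2 e^{-t^2/(2\sigma^2)}$. A union bound over $i \in \{1,\ldots,N\}$ immediately yields $\pr[\|\epsilon\|_\infty > t] \leq 2N e^{-t^2/(2\sigma^2)}$. The equivalent probability form is then purely algebraic: substituting $t = \sigma\sqrt{2\log(2N/\delta)}$ balances the right-hand side to exactly $\delta$.

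For the expectation bound, I would follow the standard Jensen-then-optimize pattern. For any $s > 0$, since $\exp$ is convex and monotone, Jensen's inequality gives $\exp(s\,\E[\|\epsilon\|_\infty]) \leq \E[\exp(s\|\epsilon\|_\infty)]$. The key move is to rewrite $\|\epsilon\|_\infty = \max\{\epsilon_1,\ldots,\epsilon_N,-\epsilon_1,\ldots,-\epsilon_N\}$ as a maximum over $2N$ sub-Gaussian variables, push the max inside the exponential, bound it by a sum, and apply the MGF bound to each of the $2N$ terms, giving $\E[\exp(s\|\epsilon\|_\infty)] \leq 2N e^{s^2\sigma^2/2}$. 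Taking logarithms yields $\E[\|\epsilon\|_\infty] \leq \frac{\log(2N)}{s} + \frac{s\sigma^2}{2}$, and optimizing with $s = \sqrt{2\log(2N)}/\sigma$ produces the stated constant.

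There is no real obstacle here; the only subtle point is handling $\|\cdot\|_\infty$ (an absolute-value maximum) rather than a signed maximum, which is resolved uniformly in both steps by treating $\epsilon_i$ and $-\epsilon_i$ together, producing the factor of $2N$ (rather than $N$) inside the logarithm. Using the MGF route for the expectation, rather than $\E[\|\epsilon\|_\infty] = \int_0^\infty \pr[\|\epsilon\|_\infty > t]\,dt$, is what delivers the exact constant stated in the lemma.
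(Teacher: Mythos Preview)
Your proposal is correct and follows the standard textbook argument. The paper does not supply its own proof of this lemma; it simply cites Theorem~1.14 of \citet{rigollet15HiDStatsNotes}, whose proof is precisely the Jensen-plus-MGF route for the expectation bound and the Chernoff-plus-union-bound route for the tail, exactly as you outline.
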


Proofs of these standard results can be found in, e.g., \citet{rigollet15HiDStatsNotes} (Theorem 1.14). Note that these bounds make no independence assumptions whatsoever on the coordinates of $\epsilon$. This will be crucial for both of our results under sub-Gaussian noise assumptions.

Second, we recall a classic inequality from analysis. The proof, which is based on clever use of H\"older's inequality, can found in \citet{beckner75inequalitiesFourier}.
\begin{lemma}[Young's Inequality for Convolutions]
Suppose $p,q,r \in [1,\infty]$ satisfy
\[1 + \frac{1}{r} = \frac{1}{p} + \frac{1}{q}.\]
Then, for any two $\R$-valued sequences $f$ and $g$,
\[\|f * g\|_r \leq \|f\|_p \|g\|_q.\]
\label{lemma:youngs}
\end{lemma}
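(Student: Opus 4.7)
The plan is to prove Young's inequality by a clever three-term H\"older argument applied to a careful factorization of the pointwise product $f(y)g(x-y)$.

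First I would dispose of the boundary cases, since the main argument is cleanest when $p,q,r \in (1,\infty)$. The condition $1 + 1/r = 1/p + 1/q$ with $r = \infty$ forces $1/p + 1/q = 1$, giving $\|f*g\|_\infty \leq \|f\|_p \|g\|_q$ directly from H\"older applied pointwise. When $p = 1$ (respectively $q=1$), one has $r = q$ (respectively $r = p$), and the claim $\|f*g\|_r \leq \|f\|_1 \|g\|_r$ follows by Minkowski's integral inequality (triangle inequality for the $\ell^r$-norm together with translation invariance $\|g(\cdot - y)\|_r = \|g\|_r$).

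For the main case $p,q,r \in (1,\infty)$, the key idea is to write the integrand of $(f*g)(x) = \sum_y f(y)g(x-y)$ as a product of three nonnegative factors,
\[
|f(y)\,g(x-y)|
= \bigl(|f(y)|^p |g(x-y)|^q\bigr)^{1/r}
\cdot |f(y)|^{\,1-p/r}
\cdot |g(x-y)|^{\,1-q/r},
\]
and apply H\"older's inequality in three exponents $r, s_1, s_2$, where
\[
s_1 := \frac{pr}{r-p}, \qquad s_2 := \frac{qr}{r-q}.
\]
A short algebraic check using the hypothesis $1 + 1/r = 1/p + 1/q$ shows that $1/r + 1/s_1 + 1/s_2 = 1$, so H\"older applies. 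Summing over $y$ yields
\[
|(f*g)(x)|
\leq \Bigl( \sum_y |f(y)|^p |g(x-y)|^q \Bigr)^{1/r}
     \|f\|_p^{\,1-p/r}\, \|g\|_q^{\,1-q/r},
\]
since $\bigl(\sum_y |f(y)|^{s_1(1-p/r)}\bigr)^{1/s_1} = \|f\|_p^{p/s_1} = \|f\|_p^{1-p/r}$ and similarly for $g$ (using translation invariance of the $\ell^q$ norm).

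To finish, I would raise both sides to the $r$-th power, sum over $x$, and apply Fubini to the remaining double sum:
\[
\sum_x \sum_y |f(y)|^p |g(x-y)|^q
= \sum_y |f(y)|^p \sum_x |g(x-y)|^q
= \|f\|_p^{\,p}\, \|g\|_q^{\,q}.
\]
Combining gives $\|f*g\|_r^{\,r} \leq \|f\|_p^{\,r}\,\|g\|_q^{\,r}$, and taking $r$-th roots concludes. The main obstacle is not analytic difficulty but rather finding (and verifying) the right factorization and exponent triple; once the identity $1/r + 1/s_1 + 1/s_2 = 1$ is confirmed, the rest is mechanical. Matching the indices correctly in the three-term H\"older step is where one must take care.
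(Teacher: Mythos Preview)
Your proof is correct and complete. The paper does not actually supply its own proof of this lemma; it simply cites \citet{beckner75inequalitiesFourier} and remarks that the argument ``is based on clever use of H\"older's inequality,'' which is exactly the three-term H\"older factorization you carry out.
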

A relevant corollary of Young's inequality is
\begin{corollary}[Bound on $\|R \otimes D\|_q$]
Consider two matrices $R \in \R^{(N - n + 1) \times K}$ and $D \in \R^{n \times K}$ such that, for some $q \geq 1$, $\|D\|_{q,\infty} \leq 1$. Then,
\[\|R \otimes D\|_q \leq \|R\|_{1,1} \|D\|_{q,\infty}.\]
\label{corr:trivial_bound}
\end{corollary}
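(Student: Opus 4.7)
The plan is to combine the triangle inequality for the $\L_q$ norm with Young's inequality for convolutions (Lemma~\ref{lemma:youngs}), term by term in the multi-convolution sum.

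First, I would expand the multi-convolution by its definition as $R \otimes D = \sum_{k=1}^K R_k * D_k$ and apply the triangle inequality for $\|\cdot\|_q$ to obtain
\[\|R \otimes D\|_q \leq \sum_{k=1}^K \|R_k * D_k\|_q.\]

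Next, I would invoke Young's inequality with the exponent choice $p = 1$, $q = q$, $r = q$, which satisfies the required relation $1 + \tfrac{1}{r} = \tfrac{1}{p} + \tfrac{1}{q}$. This yields, for each $k$,
\[\|R_k * D_k\|_q \leq \|R_k\|_1 \|D_k\|_q.\]
Since by definition $\|D\|_{q,\infty} = \max_k \|D_k\|_q$, each $\|D_k\|_q$ is bounded by $\|D\|_{q,\infty}$, which I would pull out of the sum to get
\[\|R \otimes D\|_q \leq \|D\|_{q,\infty} \sum_{k=1}^K \|R_k\|_1 = \|R\|_{1,1} \|D\|_{q,\infty},\]
using the definition $\|R\|_{1,1} = \sum_k \|R_k\|_1$ in the final equality.

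There is no genuine obstacle here: the only substantive step is the correct selection of exponents in Young's inequality ($p=1$, $q=q$, $r=q$), which makes the convolution norm inherit the $\L_1$ factor from $R$ and the $\L_q$ factor from $D$, precisely aligning with the $\L_{1,1}$ and $\L_{q,\infty}$ structure on the right-hand side. The hypothesis $\|D\|_{q,\infty} \leq 1$ given in the statement is actually not needed for the stated inequality itself; it only becomes relevant when one wishes to drop the $\|D\|_{q,\infty}$ factor to recover the bound $\|R \otimes D\|_q \leq \|R\|_{1,1}$ used elsewhere.
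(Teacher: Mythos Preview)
Your proposal is correct and follows essentially the same approach as the paper: expand the multi-convolution, apply the triangle inequality, use Young's inequality with exponents $(1,q,q)$ on each term, and factor out $\|D\|_{q,\infty}$. Your additional observation that the hypothesis $\|D\|_{q,\infty} \leq 1$ is not actually needed for the inequality as stated is also correct.
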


\begin{proof}
By the triangle inequality and Young's inequality for convolutions,
\begin{align*}
\|R \otimes D\|_q
& = \left\| \sum_{j = 1}^K R_k * D_k \right\|_q \\
& \leq \sum_{j = 1}^K \| R_k * D_k \|_q \\
& \leq \sum_{j = 1}^K \|R_k\|_1 \| D_k \|_q \\
& \leq \|D\|_{q,\infty} \sum_{j = 1}^K \|R_k\|_1
  = \|R\|_{1,1} \|D\|_{q,\infty}
\end{align*}
\end{proof}

In particular, Corollary~\ref{corr:trivial_bound} implies that the average squared error of a trivial CSDL estimator that always estimates $R$ by $\hat R = 0 \in \R^{(N - n + 1) \times K}$ is always at most $\|R\|_{1,1}^2/N$.

Finally, our proofs will make use convolution matrices defined as follows:
\begin{definition}[Convolution Matrix]
Let $x \in \R^n$. Then, the matrix
\[T_{x,N}
  := \begin{bmatrix}
  x_1 & 0 & 0 & \cdots & 0 & 0 \\
  x_2 & x_1 & 0 & \cdots & 0 & 0 \\
  x_3 & x_2 & x_1 & \cdots & 0 & 0 \\
  \vdots & \vdots & \vdots & \ddots & \vdots & \vdots \\
  0 & 0 & 0 & \cdots & x_n & x_{n - 1} \\
  0 & 0 & 0 & \cdots & 0   & x_n \\
\end{bmatrix}
  \in \R^{N \times (N - n + 1)}\]
is called the length-$N$ convolution matrix of $x$.
\end{definition}
The convolution matrix is clearly named as such because, for $x \in \R^n$ and $y \in \R^{N - n + 1}$, $T_{x,N} y = x * y$. We will, in particular, make use of the fact that (transposes of) convolution matrices map jointly sub-Gaussian random variables to componentwise sub-Gaussian random variables, as expressed in the following lemma:
\begin{lemma}
Suppose $x \in \R^n$, and suppose an $\R^N$-valued random variable $\epsilon$ is jointly sub-Gaussian with constant $\sigma$. Then, the $\R^{N - n + 1}$-valued random variable $T_{x,N}^T \epsilon$ is componentwise sub-Gaussian with constant $\sigma \|x\|_2$.
\label{lemma:sub_gaussian_convolution}
\end{lemma}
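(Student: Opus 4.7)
The plan is to reduce the claim about componentwise sub-Gaussianity of $T_{x,N}^T \epsilon$ directly to the joint sub-Gaussianity hypothesis on $\epsilon$ by recognizing each coordinate of $T_{x,N}^T \epsilon$ as a single inner product with a vector whose $\L_2$ norm equals $\|x\|_2$.

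More concretely, I would first fix an index $i \in \{1, \dots, N - n + 1\}$ and read off the $i$-th coordinate of $T_{x,N}^T \epsilon$ from the definition of the convolution matrix. Writing $v_i \in \R^N$ for the $i$-th column of $T_{x,N}^T$ (equivalently the $i$-th row of $T_{x,N}$), the structure of $T_{x,N}$ shows that $v_i$ has $x_1, x_2, \dots, x_n$ in positions $i, i+1, \dots, i + n - 1$ and zeros elsewhere. In particular, $v_i$ is just a zero-padded shift of $x$, so $\|v_i\|_2 = \|x\|_2$, and $(T_{x,N}^T \epsilon)_i = \langle v_i, \epsilon \rangle$.

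Next, for arbitrary $t \in \R$, I would compute
\[
\E\!\left[ e^{t (T_{x,N}^T \epsilon)_i} \right]
  = \E\!\left[ e^{\langle t v_i,\, \epsilon \rangle} \right]
  \leq \exp\!\left( \frac{\|t v_i\|_2^2 \sigma^2}{2} \right)
  = \exp\!\left( \frac{t^2 \|x\|_2^2 \sigma^2}{2} \right),
\]
where the inequality applies the joint sub-Gaussianity hypothesis with the argument $t v_i \in \R^N$. Since this bound is independent of $i$, taking the supremum over $i \in \{1, \dots, N - n + 1\}$ yields precisely the componentwise sub-Gaussian bound with constant $\sigma \|x\|_2$.

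There is no real obstacle here: the only thing to be careful about is correctly identifying the columns of $T_{x,N}^T$ from the displayed matrix so that each coordinate of $T_{x,N}^T \epsilon$ really is a single linear combination of entries of $\epsilon$ with coefficient vector whose $\L_2$-norm equals $\|x\|_2$ (and not, say, a partial convolution whose effective weight vector has smaller norm near the boundary). After that bookkeeping, the inequality is a one-line application of the joint sub-Gaussian moment generating function bound.
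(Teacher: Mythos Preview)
Your proposal is correct and matches the paper's approach: write each coordinate of $T_{x,N}^T\epsilon$ as $\langle v_i,\epsilon\rangle$ with $\|v_i\|_2=\|x\|_2$ and apply the joint sub-Gaussian moment generating function bound directly. One small slip: you mean the $i$-th \emph{column} of $T_{x,N}$ (equivalently the $i$-th \emph{row} of $T_{x,N}^T$), not the reverse; your description of $v_i$ thereafter is correct, and your observation that every such column contains the full vector $x$ with no boundary truncation actually makes your treatment slightly more complete than the paper's, which works out only the $n$-th coordinate and defers the rest to ``more careful indexing.''
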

\begin{proof}
Let $x \in \R^n$. We show this here for the $n^{th}$ coordinate $\left( T_{x,N}^T \epsilon \right)_n$ of $T_{x,N}^T \epsilon$. While the result clearly holds for other coordinates, more careful indexing is required for the first and last $n - 1$ coordinates, due to the structure of $T_x$. Since $\epsilon$ is jointly sub-Gaussian,
\begin{align*}
\E \left[ \exp \left( t \left( T_x^T \epsilon \right)_n \right) \right]
& = \E \left[ \exp \left( t\sum_{j = 1}^n x_j \epsilon_j \right) \right] \\
& \leq \exp \left( t^2 \|x\|_2^2 \sigma^2/2 \right).
\end{align*}
\end{proof}

Finally, our lower bounds are based on the following standard information-theoretic lower bound for estimating the mean of an $\L_1$-constrained Gaussian sequence:
\begin{lemma}
{\it (Corollary 5.16 of \citep{rigollet15HiDStatsNotes})} Consider the $\L_1$-constrained Gaussian sequence model, in which we observe $Z = \theta + \zeta \in \R^d$, where $\zeta \sim \mathcal{N}(0_d, \sigma^2 I_d)$ and we know that $\|\theta\|_1 \leq \lambda$. Then, we have the minimax lower bound
\[\inf_{\hat \theta} \sup_{\|\theta\|_1 \leq \lambda}
  \E \left[ \|\hat \theta - \theta\|_2^2 \right]
  \geq \frac{\lambda}{8} \min \left\{
          \lambda,
          \sigma \sqrt{\log d}
        \right\}\]
for estimating the model parameter $\theta$, where the infimum is taken over all estimators $\hat \theta$ of $\theta$ (i.e., all functions $\hat \theta : \R^d \to \R^d$). Moreover, this holds even if we know $\theta \in [0,\infty)^d$.
\label{lemma:Gaussian_sequence_model_lower_bound}
\end{lemma}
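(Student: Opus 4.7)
The plan is to derive this classical Gaussian-sequence lower bound by the usual reduction from estimation to hypothesis testing, splitting into the two regimes indicated by the $\min$. In the \emph{very noisy} regime $\lambda \leq \sigma\sqrt{\log d}$, the target is $\lambda^2/8$; in the \emph{less noisy} regime $\lambda > \sigma\sqrt{\log d}$, the target is $\lambda\sigma\sqrt{\log d}/8$. Each regime calls for a different family of hypotheses, but both are then fed into a standard information-theoretic machine.

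For the very noisy regime I would use Le Cam's two-point method. Take $\theta_0 = 0$ and $\theta_1 = \lambda e_1$; both lie in $[0,\infty)^d \cap \{\|\theta\|_1 \leq \lambda\}$, which incidentally establishes the ``Moreover'' clause here. The squared $\L_2$-separation is $\lambda^2$, while the KL divergence between the corresponding Gaussian laws is $\lambda^2/(2\sigma^2) \leq (\log d)/2$. Pinsker's inequality bounds the total-variation distance away from $1$, and the standard two-point reduction (e.g., Theorem 2.2 of Tsybakov or the version in Rigollet's notes) converts this into a minimax lower bound of order $\lambda^2$, with the constant $1/8$ obtained by slightly shrinking $\theta_1$ if necessary.

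For the less noisy regime two points are insufficient (they only give a $\lambda^2$ bound, which is worse than $\lambda\sigma\sqrt{\log d}$ in this regime), so I would use Fano's inequality over a much larger packing built from the Varshamov--Gilbert lemma. Choose $s = \lceil c\,\lambda/(\sigma\sqrt{\log d})\rceil$ for a small absolute constant $c$; in the regime of interest this satisfies $1 \leq s \leq d/2$. Obtain subsets $S_1,\ldots,S_M \subseteq [d]$ with $|S_i| = s$, pairwise Hamming separation $|S_i \triangle S_j| \geq s/4$, and $\log M \gtrsim s\log(d/s)$, and define the hypotheses $\theta^{(i)} = (\lambda/s)\,\mathbf{1}_{S_i}$. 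Each $\theta^{(i)}$ lies in $[0,\infty)^d$ with $\|\theta^{(i)}\|_1 = \lambda$; pairwise squared $\L_2$-separations are at least $(\lambda/s)^2 \cdot s/4 = \lambda^2/(4s)$, and pairwise KL divergences are at most $(\lambda/s)^2 \cdot 2s/(2\sigma^2) = \lambda^2/(s\sigma^2)$. Taking $c$ small enough forces the ratio of the KL radius to $\log M$ below any fixed level, so Fano's inequality bounds the minimax testing error away from zero; the usual reduction from estimation to testing then yields a squared-$\L_2$ lower bound of order $\lambda^2/s \asymp \lambda\sigma\sqrt{\log d}$, matching the stated bound up to the constant $1/8$.

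The main obstacle is the second regime: a naive one-coordinate packing $\theta^{(j)} = r e_j$ only yields the weaker $\sigma^2\log d$ bound, so one must jointly tune the support size $s$ and amplitude $\lambda/s$ so that (i) each hypothesis remains feasible, (ii) the Varshamov--Gilbert packing is large enough for Fano to bite, and (iii) the resulting squared-$\L_2$ separation scales as $\lambda\sigma\sqrt{\log d}$ rather than $\sigma^2\log d$. Tracking the absolute constants through Fano and through the choice of $c$ in order to land exactly on $1/8$ is routine but tedious; the extremal case $s=1$ of this construction interpolates smoothly to the Le Cam argument used in the other regime, which is a useful sanity check.
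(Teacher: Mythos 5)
The paper does not actually prove this lemma: it is imported verbatim from Corollary 5.16 of Rigollet and H\"utter's lecture notes, and the only original content in the appendix is the one-sentence observation that the ``Moreover'' clause (restriction to $\theta \in [0,\infty)^d$) holds because the cited proof already places all hypotheses on a non-negative multiple of the hypercube $\{0,1\}^d$. Your reconstruction of the second regime --- $s$-sparse non-negative hypotheses of amplitude $\lambda/s$ with $s \asymp \lambda/(\sigma\sqrt{\log d})$, a Varshamov--Gilbert packing of the supports, and Fano --- is essentially that proof, and it correctly yields both the $\lambda\sigma\sqrt{\log d}$ rate and the non-negativity claim. Two repairs are needed there. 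First, the tuning constant multiplying $s$ must be taken sufficiently \emph{large}, not small: the ratio of the KL diameter to $\log M$ is $\asymp \log d/\bigl(c^2\log(d/s)\bigr)$, which is driven below the Fano threshold by increasing $c$, at the cost of a constant factor in the separation $\lambda^2/(4s)$. Second, the feasibility requirement $s \le d/2$ silently restricts $\lambda \lesssim d\sigma\sqrt{\log d}$; indeed, for $\lambda \gg d\sigma/\sqrt{\log d}$ the displayed bound would exceed the trivial upper bound $d\sigma^2$ attained by $\hat\theta = Z$, so the lemma as transcribed implicitly assumes $\lambda$ is not that large --- your own construction flags a corner case the paper glosses over.

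The genuine gap is in your first regime. Le Cam's two-point method with $\theta_0 = 0$ and $\theta_1 = \lambda e_1$ requires the two laws to be non-trivially confusable, i.e.\ $\mathrm{KL} = \lambda^2/(2\sigma^2) = O(1)$; the assumption $\lambda \le \sigma\sqrt{\log d}$ only gives $\mathrm{KL} \le (\log d)/2$, which is unbounded in $d$, and in the sub-regime $\sigma \ll \lambda \le \sigma\sqrt{\log d}$ the total variation distance tends to $1$ and the two-point bound degenerates. Shrinking $\theta_1$ to restore confusability caps the separation at $O(\sigma^2)$, which falls short of the target $\lambda^2$ by a factor of up to $\log d$. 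The correct argument in this regime is exactly the $s = 1$ instance of your second construction: take the $M = d$ hypotheses $\{\tfrac{\lambda}{2} e_j\}_{j=1}^d$ (together with $0$), whose pairwise KL divergences are at most $\lambda^2/(4\sigma^2) \le (\log d)/4 \le (\log M)/4$, so that Fano --- not Le Cam --- keeps the testing error bounded away from zero while the squared separation remains of order $\lambda^2$. You gesture at this when you say the $s=1$ case ``interpolates smoothly to the Le Cam argument,'' but it does not: it is a $d$-point Fano argument, and that distinction is precisely what rescues the very noisy regime.
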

The $\min$ here reflects the fact that, in the extremely sparse or noisy regime $\lambda \leq \sigma \sqrt{\log d}$, the trivial estimator $\hat \theta = 0$ becomes optimal, with $\L_2$-risk at most $\lambda^2$. The last statement (that we can restrict to $\theta$ with non-negative entries), is not explicit in \citet{rigollet15HiDStatsNotes}, but can be easily seen from the proof of their Corollary 5.16, which involves restricting to $\theta$ on a non-negative multiple of the hypercube $\{0,1\}^d$.

\section{Proofs of Main Results}

This section provides proofs of the theorems mentioned in the main text.

\subsection{Upper Bounds for Constrained CSDL}
We now present proofs of our main upper bound results.

\begin{customthm}{\ref{thm:dependent_bound}}[Upper Bound for Componentwise Sub-Gaussian Noise]
Assume the TLGM holds, suppose the noise $\epsilon$ is componentwise sub-Gaussian with constant $\sigma$, and let the constrained CSDL tuning parameter $\lambda$ satisfy $\lambda \geq \|R\|_{1,1}$. Then, the reconstruction estimate $\hat X_\lambda = \hat R_\lambda \otimes \hat D_\lambda$ satisfies
\begin{equation}
\frac{1}{N} \E \left[ \|\hat X_\lambda - X\|_2^2 \right]
  \leq \frac{4 \lambda \sigma \sqrt{2 n \log(2N)}}{N}.
\label{ineq:dependent_bound_appendix}
\end{equation}
\end{customthm}

\begin{proof}
By the oracle bound (Lemma~\ref{lemma:oracle_inequality}) and the TLGM~\eqref{eq:TLGM_model},
\begin{equation}
\|\hat R_\lambda \otimes \hat D_\lambda - R \otimes D\|_2^2
  \leq 2 \langle \epsilon, \hat R_\lambda \otimes \hat D_\lambda - R \otimes D \rangle.
\label{ineq:basic_ineq}
\end{equation}
By H\"older's inequality,
\[\langle \epsilon, \hat R_\lambda \otimes \hat D_\lambda - R \otimes D \rangle \leq \|\epsilon\|_\infty \|\hat R_\lambda \otimes \hat D_\lambda - R \otimes D \|_1.\]
Then, by the triangle inequality and Young's inequality for convolutions (specifically Corollary~\ref{corr:trivial_bound}),
\begin{align*}
\|\hat R_\lambda \otimes \hat D_\lambda - R \otimes D \|_1
& \leq \|\hat R_\lambda \otimes \hat D_\lambda \|_1 + \| R \otimes D \|_1 \\
& \leq \|\hat R_\lambda\|_{1,1} \|\hat D_\lambda\|_{1,\infty} + \|R\|_{1,1} \|D\|_{1,\infty} \\
& \leq 2 \lambda \sqrt{n},
\end{align*}
where we used that fact that, for each $k \in [K]$, $\|\hat D_k\|_2 = \|D_k\|_2 = 1$ and $\hat D_k, D_k \in \R^n$, so that $\|\hat D_k\|_1, \|D_k\|_1 \leq \sqrt{n}$. Combining this series of inequalities with inequality~\eqref{ineq:basic_ineq} gives
\begin{align*}
\|\hat R_\lambda \otimes \hat D_\lambda - R \otimes D\|_2^2
\leq 4 \lambda \|\epsilon\|_\infty \sqrt{n}.
\end{align*}
Theorem~\ref{thm:dependent_bound} now follows by dividing by $N$ and applying the sub-Gaussian maximal inequality (Lemma~\ref{lemma:subgaussian_maximal_inequality}).
\end{proof}

\begin{customthm}{\ref{thm:independent_bound}}[Upper Bound for Jointly Sub-Gaussian Noise]
Assume the TLGM holds, suppose the noise $\epsilon$ is jointly sub-Gaussian with constant $\sigma$, and let the constrained CSDL tuning parameter $\lambda$ satisfy $\lambda \geq \|R\|_{1,1}$. Then, the reconstruction estimate $\hat X_\lambda = \hat R_\lambda \otimes \hat D_\lambda$ satisfies
\begin{equation}
\frac{1}{N} \E \left[ \|\hat X_\lambda - X\|_2^2 \right]
  \leq \frac{4 \lambda \sigma \sqrt{2 \log(2(N - n + 1))}}{N}.
\label{ineq:independent_bound_appendix}
\end{equation}
\end{customthm}
\begin{proof}
By Lemma~\ref{lemma:sub_gaussian_convolution}, for each $k \in [K]$, $T_{D_k}^T \epsilon$ is componentwise sub-Gaussian with constant $\sigma \|D_k\|_2 = \sigma$. Thus, the sub-Gaussian maximal inequality (Lemma~\ref{lemma:subgaussian_maximal_inequality}) implies
\[\E \left[ \left\| T_{D_k}^T \epsilon \right\|_\infty \right] \leq \sigma \sqrt{2 \log(2(N - n + 1))},\]
and so, by H\"older's inequality,
\begin{align*}
\E \left[ \left| \langle \epsilon, R_k * D_k \rangle \right| \right]
& = \E \left[ \left| \langle T_{D_k}^T \epsilon, R_k \rangle \right| \right] \\
& \leq \E \left[ \|T_{D_k}^T \epsilon\|_\infty \|R_k\|_1 \right] \\
& \leq \lambda \sigma \sqrt{2 \log(2(N - n + 1))}.
\end{align*}
Similarly, we can bound $\E \left[ \left| \langle \epsilon, (\hat R_\lambda)_k * (\hat D_\lambda)_k \rangle \right| \right]$ by $\lambda \sigma \sqrt{2 \log(2(N - n + 1))}$.
As in the componentwise sub-Gaussian case, the remainder of the proof consists of applying the oracle bound~\eqref{ineq:oracle_inequality} and the triangle inequality.
\end{proof}

\subsection{Lower Bounds}
We now present proofs of our main lower bound results.

\begin{customthm}{\ref{thm:joint_lower_bound}}[Lower Bound for Jointly sub-Gaussian Noise]
Assume the TLGM holds, and suppose that $\epsilon \sim \mathcal{N}(0, \sigma^2 I_N)$, so that $\epsilon$ is jointly sub-Gaussian with constant $\sigma$. Then, the following lower bound on the minimax average $\L_2$ reconstruction risk holds:
\begin{equation}
M(\lambda, N, n, \sigma) \geq \frac{\lambda}{8N} \min \left\{
         \lambda,
         \sigma \sqrt{\log(N - n + 1)}
       \right\}
\label{ineq:joint_lower_bound_appendix}
\end{equation}
\end{customthm}

\begin{proof}
To prove a lower bound, we can fix the dictionary $D$; doing so can only decrease the supremum in the definition of $M(\lambda, N, n, \sigma)$. In particular, let
\[D = [1,0,0,...,0]^T \in \R^n\]
denote the first canonical basis vector of $\R^n$.

Let $\I := T_D \left( [0,\infty)^{N - n + 1} \right) \subseteq \R^N$ denote the image of $[0,\infty)^{N - n + 1}$ under $T_D$. Noting that $\I$ is a convex set, let $\Pi_\I : \R^N \to \I$ denote the $\L_2$-projection operator onto $\I$; i.e.,
\[\Pi_\I(x)
  = \argmin_{y \in \I} \|y - x\|_2,
  \quad \forall x \in \R^N.\]
Also, it is easy to check that $T_D$ has a left inverse $T_D\inv : \I \to \R^{N - n + 1}$ such that, $T_D\inv T_D = I_{N - n + 1}$ (in fact, $T_D\inv = T_D^T$), and that, for all $x \in \I$,
\begin{equation}
\|T_D\inv x\|_2 = \|x\|_2.
\label{ineq:T_D_inv_norm1}
\end{equation}
Suppose we have an estimator $\hat X$ of $R \otimes D$ given $R \otimes D + \epsilon$ (so that $\hat X$ is a function from $\R^N$ to $\R^N$). Then, given an observation $Z = R + \zeta \in \R^{N - n + 1}$, where $\zeta \sim \mathcal{N}(0, \sigma^2 I)$, define the estimator
\[\hat R = T_D\inv \left( \Pi\left( \hat X(Z \otimes D) \right) \right)\]
of $R$. Then, by inequality~\eqref{ineq:T_D_inv_norm1} and the fact that $T_D R \in \I$,
\begin{align*}
\left\| \hat R - R \right\|_2
& = \left\| \hat T_D\inv \left( \Pi_\I \left( \hat X((R + \zeta) \otimes D) \right) - T_D R \right) \right\|_2 \\
& = \left\| \Pi_\I \left( \hat X((R + \zeta) \otimes D) \right) - T_D R \right\|_2 \\
& \leq \left\| \hat X((R + \zeta) \otimes D) - T_D R \right\|_2 \\
& = \left\| \hat X(R \otimes D + \zeta \otimes D) - R \otimes D \right\|_2
\end{align*}

It is trivial to check that $\epsilon = T_D \zeta$ is jointly sub-Gaussian with constant $\sigma$. Thus, after taking an infimum over $R$ with $\|R\|_{1,1} \leq \lambda$ and a supremum over estimators $\hat X$ on both sides, the lower bound follows from the $\L_1$-constrained Gaussian sequence lower bound (Lemma \ref{lemma:Gaussian_sequence_model_lower_bound}).

\end{proof}
\begin{customthm}{\ref{thm:componentwise_lower_bound}}[Lower Bound for Componentwise sub-Gaussian Noise]
Assume the TLGM holds. Then, there exists a (Gaussian) noise pattern $\epsilon$ that is componentwise sub-Gaussian with constant $\sigma$ such that the following lower bound on the minimax average $\L_2$ reconstruction risk holds:
\begin{equation}
M(\lambda, N, n, \sigma) \geq \frac{\lambda}{8N} \min \left\{
         \lambda,
         \sigma \sqrt{n \log(N - n + 1)}
       \right\}
\label{ineq:componentwise_lower_bound_appendix}
\end{equation}
\end{customthm}

\begin{proof}
The proof is similar to the jointly sub-Gaussian case, but with a different (fixed) choice of dictionary $D$. In particular, let
\[D = \left[ \frac{1}{\sqrt{n}}, \frac{1}{\sqrt{n}}, ..., \frac{1}{\sqrt{n}} \right]^T \in \R^n\]
denote the non-negative uniform $\L_2$-unit vector in $\R^n$. As previously, let $\I := T_D \left( [0,\infty)^{N - n + 1} \right) \subseteq \R^N$ denote the image of $[0,\infty)^{N - n + 1}$ under $T_D$. $\I$ is still convex, and so we can let $\Pi_\I : \R^N \to \I$ denote the $\L_2$-projection operator onto $\I$; i.e.,
\[\Pi_\I(x)
  = \argmin_{y \in \I} \|y - x\|_2,
  \quad \forall x \in \R^N.\]
Also, it is easy to check that $T_D$ has full rank $N - n + 1$, and therefore has a left inverse $T_D\inv : \I \to \R^{N - n + 1}$ such that, $T_D\inv T_D = I_{N - n + 1}$.
Moreover, for all $y \in [0,\infty)^{N - n + 1}$, $\|T_Dy\|_2 \geq \|y\|_2,$ so that, for all $x \in \I$,
\begin{equation}
\|T_D\inv x\|_2 \leq \|x\|_2.
\label{ineq:T_D_inv_norm2}
\end{equation}
Suppose we have an estimator $\hat X$ of $R \otimes D$ given $R \otimes D + \epsilon$ (so that $\hat X$ is a function from $\R^N$ to $\R^N$). Then, given an observation $Z = R + \zeta \in \R^{N - n + 1}$, where $\zeta \sim \mathcal{N}(0, n\sigma^2 I)$, define the estimator
\[\hat R = T_D\inv \left( \Pi\left( \hat X(Z \otimes D) \right) \right)\]
of $R$. Then, by inequality~\eqref{ineq:T_D_inv_norm2} and the fact that $T_D R \in \I$,
\begin{align*}
\left\| \hat R - R \right\|_2
& = \left\| \hat T_D\inv \left( \Pi_\I \left( \hat X((R + \zeta) \otimes D) \right) - T_D R \right) \right\|_2 \\
& = \left\| \Pi_\I \left( \hat X((R + \zeta) \otimes D) \right) - T_D R \right\|_2 \\
& \leq \left\| \hat X((R + \zeta) \otimes D) - T_D R \right\|_2 \\
& = \left\| \hat X(R \otimes D + \zeta \otimes D) - R \otimes D \right\|_2
\end{align*}

It remains only to observe that $\epsilon = T_D \zeta$ is componentwise sub-Gaussian with constant $\sigma$ (although it is only \emph{jointly} sub-Gaussian with constant $\sqrt{n} \sigma$). Thus, after taking an infimum over $R$ with $\|R\|_{1,1} \leq \lambda$ and a supremum over estimators $\hat X$ on both sides, the lower bound follows from the $\L_1$-constrained Gaussian sequence lower bound (Lemma \ref{lemma:Gaussian_sequence_model_lower_bound}).
\end{proof}

\subsection{Comparison to IID SDL}
Here, we prove the upper bound for IID SDL in the LGM setting, which we used to compare the performance of IID SDL and CSDL in Section~\ref{sec:IID_SDL_comparison}. For notational simplicity, we drop the convention (used in that section) of using the prime symbol $'$ to denote quantities from the LGM; in this section, all quantities are as in the LGM.
\begin{customthm}{\ref{thm:IID_SDL_bound}}[Upper Bound for IID SDL]
Assume the LGM holds, suppose the noise $\epsilon$ is componentwise sub-Gaussian with constant $\sigma$ (more precisely, for each dimension $j \in [d']$, $\epsilon_j \in \R^{N'}$ is componentwise sub-Gaussian with constant $\sigma$), and let the constrained IID SDL parameter $\lambda'$ satisfy $\lambda' \geq \|R'\|_{1,1}$. Then, the reconstruction estimate $\hat X_{\lambda'}' = \hat R_{\lambda'}' \hat D_{\lambda'}'$ satisfies
\begin{equation}
\frac{1}{Nd} \|X - \hat X_\lambda\|_{2,2}^2
  \leq \frac{4 \lambda' \sigma \sqrt{2d \log(2Nd)}}{Nd}
\label{ineq:IID_SDL_bound_appendix}
\end{equation}
\end{customthm}

\begin{proof}
Note that, by the triangle inequality and the fact that each $\|D_k\|_1 \leq \sqrt{d}$ (since $\|D_k\|_2 \leq 1$ and $D_k \in \R^d$),
\begin{align*}
\|RD\|_{1,1}
& = \sum_{i = 1}^N \|(RD)_i\|_1 \\
& = \sum_{i = 1}^N \left\| \sum_{k = 1}^K R_{i,k} D_k \right\|_1 \\
& \leq \sum_{i = 1}^N \sum_{k = 1}^K \left\| R_{i,k} D_k \right\|_1 \\
& = \sum_{i = 1}^N \sum_{k = 1}^K |R_{i,k}| \| D_k \|_1 \\
& \leq \sqrt{d} \sum_{i = 1}^N \sum_{k = 1}^K |R_{i,k}|
  = \sqrt{d} \|R\|_{1,1} \leq \sqrt{d} \lambda.
\end{align*}
Similarly, $\|\hat R_\lambda \hat D_\lambda\|_1 \leq \sqrt{d} \lambda$.
By the same proof as the CSDL oracle inequality~\eqref{ineq:oracle_inequality}, one can show that, under the linear generative model~\ref{eq:LGM},
\[\|RD - \hat R_\lambda \hat D_\lambda\|_{2,2}^2
  \leq 2\langle \epsilon, \hat R_\lambda \hat D_\lambda - RD \rangle_F,\]
where $\langle \cdot, \cdot \rangle_F$ denotes the Frobenius inner product. Thus, by H\"older's inequality and the sub-Gaussian maximal inequality (Lemma~\ref{lemma:subgaussian_maximal_inequality})
\begin{align*}
\|RD - \hat R_\lambda \hat D_\lambda\|_{2,2}^2
& \leq 2 \langle \epsilon, \hat R_\lambda \hat D_\lambda - RD \rangle_F \\
& \leq 2 \|\epsilon\|_{\infty,\infty} \| \hat R_\lambda \hat D_\lambda - RD \|_{1,1} \\
& \leq 2 \sigma \sqrt{2 \log(2Nd)} \left( \hat R_\lambda \hat D_\lambda \|_{1,1} + \| RD \|_{1,1} \right) \\
& \leq 4 \lambda \sigma \sqrt{2d \log(2N)}
\end{align*}
\end{proof}

\section{Additional Theoretical Results}
\label{app:extra_theory}
In this section, we provide proofs of a few additional upper bound results for CSDL that complement our main results. Specifically, we consider a milder finite-moment noise condition, and also consider the penalized form~\eqref{opt:penalized_CSDL} of CSDL.

\subsection{Upper Bounds under Moment Conditions}
First, while our main results consider variants of sub-Gaussian noise, we here also consider a variant with heavy-tailed noise, where we assume only that the (arbitrarily dependent) noise has some number of finite moments. The resulting guarantee holds under almost trivially weak assumptions, but the bound is also quite weak, requiring an extremely high degree of sparsity to guarantee consistency.
Experiment~\ref{experiment:4} in Appendix~\ref{app:extra_experiments} also provides experimental evidence that the rate of the upper bound may be tight, although minimax lower bounds are needed to be certain.
The precise result is as follows:
\begin{theorem}[Upper Bound under Noise with Componentwise Finite Moments]
Assume the TLGM holds, suppose that, for some $p \in [1, \infty]$, the components of the noise $\epsilon$ have finite $p^{th}$ moment at most $\mu_p$; that is,
\[\mu_p := \sup_{i \in [N]} \left( \E \left[ \epsilon_i^p \right] \right)^{1/p} < \infty.\]
Let the constrained CSDL tuning parameter $\lambda$ satisfy $\lambda \geq \|R\|_{1,1}$. Then, the reconstruction estimate $\hat X_\lambda = \hat R_\lambda \otimes \hat D_\lambda$ satisfies
\begin{equation}
\frac{1}{N} \E \left[ \|\hat X_\lambda - X\|_2^2 \right]
  \leq 4 \lambda \mu_p N^{\frac{1 - p}{p}} n^{\max\left\{ 0,\frac{p - 2}{2p} \right\}}.
\label{ineq:moment_bound}
\end{equation}
\label{thm:moment_bound_rate}
\end{theorem}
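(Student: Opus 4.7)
The plan is to follow the structure of the proofs of Theorems~\ref{thm:dependent_bound} and~\ref{thm:independent_bound}, replacing the sub-Gaussian maximal inequality by a direct moment-based bound on $\|\epsilon\|_p$. As in those proofs, the oracle inequality (Lemma~\ref{lemma:oracle_inequality}) combined with the TLGM (which kills the model-misspecification term) reduces the problem to bounding $2\,\E\langle \epsilon, W\rangle$, where $W := \hat R_\lambda \otimes \hat D_\lambda - R \otimes D$.

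I would then split this inner product via H\"older's inequality with conjugate exponents $p$ and $q := p/(p-1)$ (with the natural conventions at $p=1,\infty$), giving $\E\langle \epsilon, W\rangle \le \E\bigl[\|\epsilon\|_p \|W\|_q\bigr]$. For the noise factor, Jensen's inequality (since $x^{1/p}$ is concave for $p \ge 1$) together with the componentwise moment assumption yields
\[\E\|\epsilon\|_p
  \le \Bigl( \sum_{i=1}^N \E|\epsilon_i|^p \Bigr)^{1/p}
  \le N^{1/p} \mu_p,\]
with no appeal whatsoever to the joint dependence structure of $\epsilon$. This is exactly the feature that allowed the sub-Gaussian bounds to tolerate arbitrary noise dependence, here made available by convexity alone.

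For the reconstruction factor, I would bound $\|W\|_q$ deterministically. By the triangle inequality and Young's inequality for convolutions (Lemma~\ref{lemma:youngs}), $\|R \otimes D\|_q \le \sum_k \|R_k\|_1 \|D_k\|_q \le \|R\|_{1,1} \max_k \|D_k\|_q$, and analogously for $\hat R_\lambda \otimes \hat D_\lambda$. The key step is to translate the constraint $\|D_k\|_2 \le 1$ on $D_k \in \R^n$ into a bound on $\|D_k\|_q$ via the standard $L_q$--$L_2$ norm comparison on $\R^n$: if $q \ge 2$ (equivalently $p \le 2$), then $\|D_k\|_q \le \|D_k\|_2 \le 1$; if $q < 2$ (equivalently $p > 2$), then $\|D_k\|_q \le n^{1/q - 1/2} \|D_k\|_2$, and a short computation gives $1/q - 1/2 = (p-2)/(2p)$. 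Unifying into the exponent $\max\{0,(p-2)/(2p)\}$ and using $\|R\|_{1,1}, \|\hat R_\lambda\|_{1,1} \le \lambda$ yields $\|W\|_q \le 2\lambda\, n^{\max\{0,(p-2)/(2p)\}}$ deterministically. Multiplying the two bounds, plugging back into the oracle inequality (whose factor $2$ combines with the factor $2$ from the triangle inequality to yield the stated $4$), and dividing by $N$ produces exactly~\eqref{ineq:moment_bound}.

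The main nuisance --- more than a genuine obstacle --- is the case split at $p=2$, where the $n$-exponent transitions from $0$ to $(p-2)/(2p)$, and careful treatment of the endpoints $p=1,\infty$. As a sanity check, at $p=\infty$ one recovers the (log-free) rate $4\lambda \mu_\infty \sqrt{n}/N$, paralleling Theorem~\ref{thm:dependent_bound}, while at $p=2$ one obtains $4\lambda \mu_2/\sqrt{N}$ with no $n$-factor, paralleling Theorem~\ref{thm:independent_bound}, so the moment bound interpolates cleanly between the two sub-Gaussian cases.
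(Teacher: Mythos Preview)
Your proposal is correct and follows essentially the same route as the paper's proof: oracle inequality under the TLGM, H\"older with exponents $(p,q)$, a deterministic bound on $\|W\|_q$ via the triangle inequality, Young's convolution inequality, and the $\ell_q$--$\ell_2$ comparison on $\R^n$ (with the same case split at $p=2$), followed by the Jensen/moment bound $\E\|\epsilon\|_p \le N^{1/p}\mu_p$. The only point worth flagging is that your bound $\E\langle \epsilon, W\rangle \le \E[\|\epsilon\|_p\|W\|_q]$ does not factor since $W$ depends on $\epsilon$, but you correctly sidestep this by bounding $\|W\|_q$ deterministically, exactly as the paper does.
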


\begin{proof}
As in the sub-Gaussian case, the proof begins with the oracle inequality~\eqref{ineq:oracle_inequality}, which, under the TLGM, gives
\[\|D \otimes R - \hat D_\lambda \otimes \hat R_\lambda\|_2^2
  \leq 2\langle \epsilon, \hat R_\lambda \otimes \hat D_\lambda - R \otimes D \rangle.\]
By H\"older's inequality,
\[\langle \epsilon, \hat R_\lambda \otimes \hat D_\lambda - R \otimes D \rangle \leq \|\epsilon\|_p \|\hat R_\Lambda \otimes \hat D_\lambda - R \otimes D \|_q,\]
where $q = \frac{p}{p - 1} \geq 1$.

If $q \geq 2$ (i.e., if $p \leq 2$), then, for each $k \in [K]$,
$\|D_k\|_q \leq \|D_k\|_2 = 1$ and $\|(\hat D_\lambda)_k\|_q \leq \|(\hat D_\lambda)_k\|_2 = 1$. Otherwise
\[\|D_k\|_q \leq n^{1/q - 1/2} \|D_k\|_2 = n^{1/q - 1/2} = n^{\frac{p - 2}{2p}},\] and, similarly, $\|(\hat D_\lambda)_k\|_q \leq n^{\frac{p - 2}{2p}}$. In short,
\[\|D\|_{q,\infty}, \|\hat D_\lambda\|_{q,\infty}
  \leq n^{\max\left\{ 0,\frac{p - 2}{2p} \right\}}.\]

Hence, by the triangle inequality and Corollary~\ref{corr:trivial_bound} of Young's inequality for convolutions,
\begin{align*}
\|\hat R_\lambda \otimes \hat D_\lambda - R \otimes D \|_q
& \leq \|\hat R_\lambda \otimes \hat D_\lambda \|_q + \| R \otimes D \|_q \\
& \leq \|\hat R_\lambda\|_{1,1} \|\hat D_\lambda\|_{q,\infty} + \|R\|_{1,1} \|D\|_{q,\infty} \\
& \leq 2\lambda n^{\max\left\{ 0,\frac{p - 2}{2p} \right\}},
\end{align*}

Combining these inequalities, we have
\begin{align*}
\|\hat R_\lambda \otimes \hat D_\lambda - R \otimes D\|_2^2
  \leq 4 \lambda \|\epsilon\|_2 n^{\max\left\{ 0,\frac{p - 2}{2p} \right\}}.
\end{align*}
Theorem~\ref{thm:moment_bound_rate} now follows by observing that
\[\E \left[ \|\epsilon\|_p \right]
  \leq \left( \E \left[ \|\epsilon\|_p^p \right] \right)^{1/p}
  \leq \mu_p N^{1/p}.
\]
\end{proof}

\subsection{Upper Bounds for Penalized CSDL}
In this section, we show that the upper bounds presented previously for $\L_1$-constrained CSDL~\eqref{opt:constrained_CSDL} also hold for $\L_1$-penalized CSDL~\eqref{opt:penalized_CSDL}. The only major difference between the constrained and penalized forms (aside from computational considerations) is that the tuning parameters $\lambda$ (for the constrained form) and $\lambda'$ (for the penalized form) should be chosen differently; for the constrained form, $\lambda = \|R\|_{1,1}$ is optimal, whereas, for the penalized form, $\lambda' = \sigma \sqrt{2 \log(2N)}$ is optimal. This difference can be practically significant, in that either $\|R\|_{1,1}$ or $\sigma$ may be easier to estimate.

We now demonstrate the bound for the case of componentwise sub-Gaussian noise; the proofs for jointly sub-Gaussian or finite-moment noise can then be easily derived by analogy with the proofs for constrained CSDL.

\begin{theorem}[Upper Bound for Penalized CSDL under Componentwise Sub-Gaussian Noise]
Let $\delta \in (0,1)$. Assume the TLGM holds, suppose the noise $\epsilon$ is componentwise sub-Gaussian with constant $\sigma$, and let the penalized CSDL tuning parameter $\lambda'$ satisfy $\lambda' = \sigma \sqrt{2 \log \left( \frac{2N}{\delta} \right)}$. Then, with probability at least $1 - \delta$, the reconstruction estimate $\hat X_{\lambda'} = \hat R_{\lambda'} \otimes \hat D_{\lambda'}$ based on penalized CSDL~\eqref{opt:penalized_CSDL} satisfies
\begin{equation}
\frac{1}{N} \|\hat R_{\lambda'} \otimes \hat D_{\lambda'} - R \otimes D\|_2^2
  \leq \frac{4 \lambda' \sigma \sqrt{2 n \log(2N/\delta)}}{N}.
\label{ineq:penalized_CSDL_bound}
\end{equation}
\label{thm:penalized_CSDL_bound}
\end{theorem}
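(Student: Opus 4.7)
The argument parallels the proof of Theorem~\ref{thm:dependent_bound}, but with the basic inequality replaced by the analogue appropriate for the penalized form~\eqref{opt:penalized_CSDL}, and with the expectation bound on $\|\epsilon\|_\infty$ replaced by a high-probability bound. First, since $(\hat R_{\lambda'}, \hat D_{\lambda'})$ minimizes the penalized objective and the true $(R, D)$ is feasible in $\S$, we have
\[
\|Y - \hat R_{\lambda'} \otimes \hat D_{\lambda'}\|_2^2 + \lambda' \|\hat R_{\lambda'}\|_{1,1}
  \;\leq\; \|Y - R \otimes D\|_2^2 + \lambda' \|R\|_{1,1}.
\]
Plugging in $Y = X + \epsilon$ with $X = R \otimes D$ (by the TLGM) and cancelling the common $\|\epsilon\|_2^2$ term yields the penalized basic inequality
\[
\|X - \hat R_{\lambda'} \otimes \hat D_{\lambda'}\|_2^2
  \;\leq\; 2 \langle \epsilon, \hat R_{\lambda'} \otimes \hat D_{\lambda'} - R \otimes D \rangle
        + \lambda' \bigl( \|R\|_{1,1} - \|\hat R_{\lambda'}\|_{1,1} \bigr).
\]

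Second, I would bound the stochastic cross term in the \textbf{dual} form, exploiting that each summand $R_k * D_k = T_{D_k} R_k$ so that $\langle \epsilon, R_k * D_k \rangle = \langle T_{D_k}^T \epsilon, R_k \rangle$. Applying H\"older's inequality to each $k$ and observing that each coordinate of $T_{D_k}^T \epsilon$ is an inner product of $D_k$ (with $\|D_k\|_2 \leq 1$, hence $\|D_k\|_1 \leq \sqrt{n}$) against a window of $\epsilon$, I get $\|T_{D_k}^T \epsilon\|_\infty \leq \sqrt{n}\|\epsilon\|_\infty$ and analogously for $\hat D_k$. Summing over $k$ and using the triangle inequality gives
\[
\bigl| \langle \epsilon, \hat R_{\lambda'} \otimes \hat D_{\lambda'} - R \otimes D \rangle \bigr|
  \;\leq\; \sqrt{n}\,\|\epsilon\|_\infty \bigl( \|\hat R_{\lambda'}\|_{1,1} + \|R\|_{1,1} \bigr).
\]

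Third, I would invoke the high-probability form of the sub-Gaussian maximal inequality (Lemma~\ref{lemma:subgaussian_maximal_inequality}) to conclude that on the event $\mathcal{E} := \{\|\epsilon\|_\infty \leq \sigma\sqrt{2\log(2N/\delta)}\}$, which has probability at least $1 - \delta$, the stochastic term is controlled by $\sqrt{n}\,\sigma\sqrt{2\log(2N/\delta)}\,(\|\hat R_{\lambda'}\|_{1,1} + \|R\|_{1,1})$. Combining this with the penalized basic inequality, the $\|\hat R_{\lambda'}\|_{1,1}$ contribution from the stochastic term gets absorbed by the $-\lambda'\|\hat R_{\lambda'}\|_{1,1}$ penalty difference once $\lambda'$ is taken large enough (the stated choice $\lambda' = \sigma\sqrt{2\log(2N/\delta)}$ scales appropriately up to the factor $\sqrt n$ coming from the $\|D_k\|_1 \leq \sqrt n$ step), leaving a bound of the advertised form on the left-hand side after dividing by $N$.

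\textbf{Main obstacle.} The delicate step is Step~4: carefully tracking constants so that the stochastic contribution of $\|\hat R_{\lambda'}\|_{1,1}$ is dominated by the $\lambda'\|\hat R_{\lambda'}\|_{1,1}$ penalty term, which is the whole point of introducing the penalty in the first place. Unlike the constrained case (Theorem~\ref{thm:dependent_bound}), where $\|\hat R_\lambda\|_{1,1} \leq \lambda$ by feasibility, here $\|\hat R_{\lambda'}\|_{1,1}$ is a priori unbounded and must be tamed via the penalty; this is exactly why the correct scaling of $\lambda'$ is tied to $\sigma\sqrt{\log(2N/\delta)}$ (and in particular, depends on the noise level rather than on the true sparsity $\|R\|_{1,1}$), in sharp contrast to the optimal choice $\lambda = \|R\|_{1,1}$ in the constrained form.
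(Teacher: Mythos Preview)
Your proposal is correct and follows essentially the same route as the paper: penalized basic inequality, then H\"older to reduce the cross term to $\sqrt{n}\,\|\epsilon\|_\infty(\|\hat R_{\lambda'}\|_{1,1}+\|R\|_{1,1})$, then the high-probability sub-Gaussian maximal bound, and finally absorption of the $\|\hat R_{\lambda'}\|_{1,1}$ term by the penalty. Your ``dual'' treatment via $T_{D_k}^T\epsilon$ is a cosmetic variant of the paper's direct H\"older-plus-Young step and yields the identical bound; you also correctly flag that absorption actually requires $\lambda'$ on the scale $\sigma\sqrt{2n\log(2N/\delta)}$ rather than the stated $\sigma\sqrt{2\log(2N/\delta)}$---the paper's own proof uses the former, so the theorem statement appears to be missing a $\sqrt{n}$.
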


\begin{proof}
By construction of $\left( \hat R_{\lambda'}, \hat D_{\lambda'} \right)$,
\[\|\hat R_{\lambda'} \otimes \hat D_{\lambda'} - Y\|_2^2 + \lambda' \|\hat R_{\lambda'}\|_{1,1}
  \leq \|R \otimes D - Y\|_2^2 + \lambda' \|R\|_{1,1}.\]
Expanding $Y = X + \epsilon = R \otimes D + \epsilon$ and rearranging gives
\begin{align*}
\left\| X - \hat X_{\lambda'} \right\|_2^2
& \leq 2 \langle \epsilon, \hat R_{\lambda'} \otimes \hat D_{\lambda'} - R \otimes D \rangle \\
& + \lambda' \left( \|R\|_{1,1} - \|\hat R_{\lambda'}\|_{1,1} \right)
\end{align*}
Again, by H\"older's inequality
\[\langle \epsilon, \hat R_{\lambda'} \otimes \hat D_{\lambda'} - R \otimes D \rangle
  \leq \|\epsilon\|_\infty \|R \otimes D - \hat R_{\lambda'} \otimes \hat D_{\lambda'}\|_1,\]
and, by the tail-bound form of the sub-Gaussian maximal inequality (Lemma~\ref{lemma:subgaussian_maximal_inequality}) with probability at least $1 - \delta$,
\[\|\epsilon\|_\infty \leq \sigma \sqrt{2 \log \left( \frac{2N}{\delta} \right)}.\]
By the triangle inequality and Young's inequality (Lemma~\ref{corr:trivial_bound})
\begin{align*}
\|R \otimes D - \hat R_{\lambda'} \otimes \hat D_{\lambda'}\|_1
& \leq \|R\|_{1,1} \|D\|_{1,\infty} - \|\hat R_{\lambda'}\|_{1,1} \|\hat D_{\lambda'}\|_{1,\infty} \\
& \leq \sqrt{n} \left( \|R\|_{1,1} + \|\hat R_{\lambda'}\|_{1,1} \right)
\end{align*}
Hence, with probability at least $1 - \delta$, if $\lambda' \geq \sigma \sqrt{2 n \log \left( 2N/\delta \right)}$,
\begin{align*}
\left\| R \otimes D - \hat R_{\lambda'} \otimes \hat D_{\lambda'} \right\|_2^2
& \leq 2 \sigma \sqrt{2 n \log \left( 2N/\delta \right)} \left( \|R\|_{1,1} + \|\hat R_{\lambda'}\|_{1,1} \right) \\
& + 2 \lambda' \left( \|R\|_{1,1} - \|\hat R_{\lambda'}\|_{1,1} \right)
  \leq 2\lambda' \|R\|_{1,1}.
\end{align*}
For the specific value $\lambda' = \sigma \sqrt{2 n \log \left( 2N/\delta \right)}$, this implies
\[\frac{1}{N} \left\| \hat R_{\lambda'} \otimes \hat D_{\lambda'} - R \otimes D \right\|_2^2
  \leq 2 \sigma \lambda' \frac{\sqrt{2 n \log \left( 2N/\delta \right)}}{N}.\]
\end{proof}

\section{Details of Optimization Algorithm and Experiments}

The constrained CSDL estimator~\eqref{opt:constrained_CSDL} was computed using a simple alternating projected gradient descent algorithm, which iteratively performs the following four steps: 1) gradient step with respect to $D$, 2) project the columns of $D$ onto the unit sphere, 3) gradient step with respect to $R$, and 4) project $R$ (with respect to Frobenius norm) into the intersection of the non-negative orthant and the $\L_{1,1}$ ball of radius $\lambda$.
This algorithm was run for $200$ iterations, with decaying step size $0.01 i^{-1/2}$, where $i \in [200]$ is the iteration number.
Pseudocode is given in Algorithm~\ref{alg:alternating_minimization}.
In the pseudocode, $\text{Project}(x, p, r)$ denotes a subroutine that returns the projection (with respect to $\L_2$/Frobenius norm) of $x$ onto the (convex) $p$-norm ball of radius $r$.

\begin{algorithm}
\begin{algorithmic}
\State $\hat D_\lambda \gets \text{Project}(\mathcal{N}(0_{n \times K}), I, (2, \infty), 1)$
\State $\hat R_\lambda \gets \text{Project}(\mathcal{N}(0_{(N - n + 1) \times K}, I), (1, 1), \lambda)$
\For{$i \gets 1;\; i <= 200;\; i++$}
\State $\gamma \gets 0.01i^{-1/2}$
\State $\hat D_\lambda \gets \hat D_\lambda - \gamma \nabla_D \|X - \hat R_\lambda \otimes \hat D_\lambda\|_2^2$
\State $\hat D_\lambda \gets \text{Project}(\hat D_\lambda, (2, \infty), 1)$
\State $\hat R_\lambda \gets \hat R_\lambda - \gamma \nabla_R \|X - \hat R_\lambda \otimes \hat D_\lambda\|_2^2$
\State $\hat R_\lambda \gets \text{Project}(\hat R_\lambda, (1, 1), \lambda)$
\EndFor
\State $\hat X_\lambda \gets \hat R_\lambda \otimes \hat D_\lambda$
\end{algorithmic}
\caption{Alternating minimization used to optimize~\eqref{opt:constrained_CSDL}.}
\label{alg:alternating_minimization}
\end{algorithm}

{\it Parameters of Experimental Setup:}
In all experiments, unless noted otherwise, the data are generated using the following parameter settings:
\begin{itemize}
\setlength{\itemsep}{0pt}
\item
sequence length $N = 1000$
\item
$\L_1$-Sparsity $\|R\|_{1,1} = 100$
\item
Dictionary element length $n = 10$
\item
Dictionary size $K = 5$
\item
Noise level $\sigma = 0.1$
\end{itemize}

\section{Additional Experimental Results}
\label{app:extra_experiments}
Here, we present additional experiments on simulated data, that further support our theoretical results.

\begin{experiment}
\label{experiment:3}
Our third experiment studies the sensitivity of the estimator $\hat X_\lambda$ to its tuning parameter. Figure~\ref{fig:exp3} shows error as a function of $\lambda$ for logarithmically spaced valued between $10^{-2}$ and $10^4$. The error appears robust to setting $\lambda \gg \|R\|_{1,1}$ (although the upper bound becomes quite loose). In fact, the error does not appear to increase for $\lambda \geq \|R\|_{1,1}$, so it appears that, in this regime, the estimator may be adaptive to the true $\|R\|_{1,1}$. More work is needed to determine if this is the case in general.

\begin{figure}
\centering
\includegraphics[width=\linewidth,trim={0mm 0mm 0mm 0mm},clip]{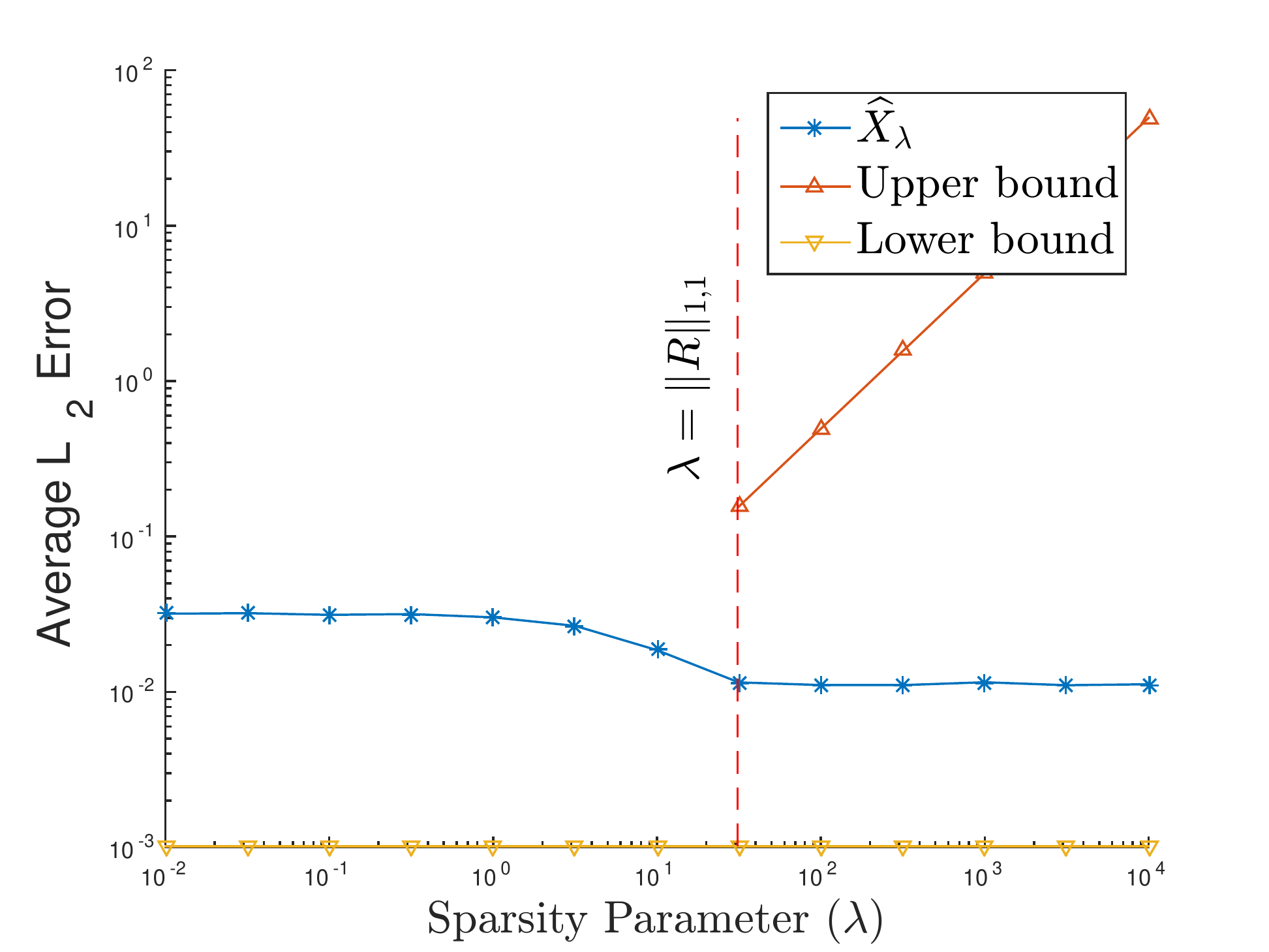}
\caption{Experiment 3: Average $\L_2$-error as a function of the tuning parameter $\lambda$ of $\hat X_\lambda$. The dashed line indicates the $\L_1$-sparsity $\|R\|_{1,1} = \left\lfloor \sqrt{N} \right\rfloor = 33$. Note that the upper bound only applies when $\lambda \geq \|R\|_{1,1}$.}
\label{fig:exp3}
\end{figure}
\end{experiment}

\begin{experiment}
\label{experiment:4}
This experiment mimics Experiment~\ref{experiment:1} from the main paper, but uses heavy tailed noise with only finitely many finite moments. Specifically, we sample the entries of $\epsilon$ IID from a symmetric generalized Pareto distribution with threshold (location) parameter $\theta = 2$, scale parameter $\sigma = 1$, and tail index (shape) parameter $\xi = 1/2$, which has probability density function
\[\frac{1}{2\sigma} \left( 1 + \frac{\xi (|x| - \mu)}{\sigma} \right)^{-\frac{1}{\xi} - 1}\]
supported on $(-\infty,-2) \cup (2,\infty)$. This choice of $\epsilon$ has $\mu_p = \left( \E \left[ \epsilon_i^p \right] \right)^{1/p} < \infty$ if and only if $p < 2$, and so Theorem~\ref{thm:moment_bound_rate} suggests we may see a slower convergence rate of order $\|R\|_{1,1}N^{-1/2}$. Note that, in this case, the trivial estimator $\hat X_\infty$ has infinite $\L_2$ error, and so we excluded it from the simulation.
Figure~\ref{fig:exp4} shows error as a function of $N$ for logarithmically spaced values between $10^2$ and $10^4$, with $\|R\|_{1,1}$ scaling as constant $\|R\|_{1,1} = 5$, square-root $\|R\|_{1,1} = \left\lfloor \sqrt{N} \right\rfloor$, and linearly $\|R\|_{1,1} = \lfloor N/10 \rfloor$.
The results appear consistent with the main prediction of Theorems~\ref{thm:moment_bound_rate}, namely that, using the optimal tuning parameter $\lambda = \|R\|_{1,1}$, the CSDL estimator is consistent only in the case where $\|R\|_{1,1} \in o \left( \sqrt{N} \right)$.
Note that, in these highly noisy settings, the trivial estimator $\hat X_0 = 0$ tends to perform best in terms of average $\L_2$ error, suggesting that little or no meaningful information can be extracted from $X$.
However, for many applications, average $\L_2$ error may not be the best performance measure, and it is possible that CSDL may still extract some useful information.

\begin{figure}
\centering
\includegraphics[width=\linewidth,trim={7mm 0mm 17mm 0mm},clip]{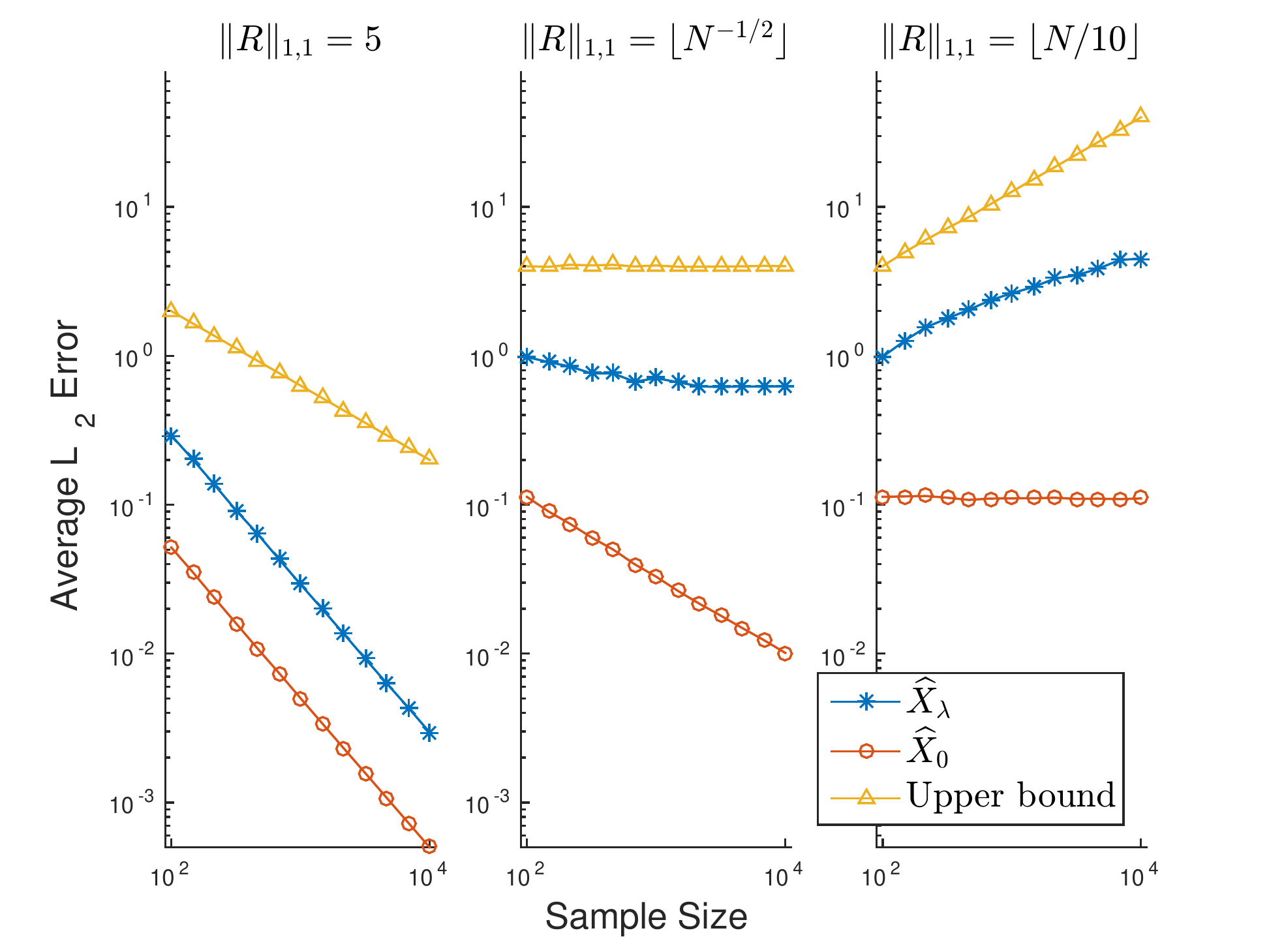}
\caption{Experiment 4: Average $\L_2$-error as a function of sequence length $N$, in the case of heavy tailed noise, with sparsity scaling as $\|R\|_{1,1} = 5$ (first panel), $\|R\|_{1,1} = \left\lfloor \sqrt{N} \right\rfloor$ (second panel), and $\|R\|_{1,1} = \lfloor N/10 \rfloor$ (third panel).}
\label{fig:exp4}
\end{figure}
\end{experiment}
\end{document}